\newtheorem{thm}{Theorem}[section]
\newtheorem{cor}[thm]{Corollary}
\newtheorem{lem}[thm]{Lemma}
\theoremstyle{definition}
\newtheorem{defn}[thm]{Definition}
\newtheorem{exas}[thm]{Examples}
\newtheorem{rem}[thm]{Remark}
\let\phi\varphi
\begin{document}
\title{A criterion for Leavitt path algebras having invariant basis number}
\maketitle
\begin{center}
T.\,G.~Nam\footnote{Institute of Mathematics, VAST, 18 Hoang Quoc Viet, Cau Giay, Hanoi, Vietnam. E-mail address: \texttt{tgnam@math.ac.vn}} and N.\,T.~Phuc\footnote{Department of IT and Mathematics Teacher Training, Dong Thap University, Vietnam. E-mail address: \texttt{ntphuc@dthu.edu.vn}

{\bf Acknowledgements}: The authors take an opportunity to express their deep gratitude to Prof. Gene Abrams (Department of Mathematics, University of Colorado, Colorado Springs, Coloralo, USA) for his valuable suggestions in order to give the final shape of the paper.}
\end{center}

\begin{abstract} In this paper, we give a matrix-theoretic criterion for the Leavitt path algebra of
a finite graph has Invariant Basis Number. Consequently, we show that the Cohn path algebra of a finite graph
has Invariant Basis Number, as well as provide some certain classes of finite graphs for which
Leavitt path algebras having Invariant Basis Number.
\medskip

\textbf{Mathematics Subject Classifications}: 16S99, 18G05, 05C25

\textbf{Key words}: Cohn path algebra; Invariant Basis Number; Leavitt path algebra.
\end{abstract}

\section{Introduction}
The concept of projective modules over rings is a generalization of the idea of a
vector space; and their structure theory, in some sense, may
be considered as a generalization of the theorem asserting the existence and
uniqueness of cardinalities of bases for vector spaces. Projective modules play an important role in
different branches of mathematics, in particular, homological algebra and algebraic
K-theory. In general ring theory it is often convenient to impose certain conditions on
the projective modules, either to exclude pathological cases or to ensure better behaviour.
For  rings we have the following successively more restrictive conditions
on the projective (and in particular the free) modules:
\begin{enumerate}
\item[(1)] Invariant Basis Number (for short, IBN),
\item[(2)] Unbounded Generating Number,
\item[(3)] stably finite,
\item[(4)] the Hermite property (in P. M. Cohn's sense),
\item[(5)] cancellation of projectives.
\end{enumerate}
It is easily verified that each of these conditions is left-right symmetric and
entails the previous ones; moreover, in general, all these classes are distinct.

The conditions (1) - (3) occurred frequently among  the hypotheses in theorems
about rings, both in algebra and topology, many years ago. For examples of
the condition (3), see \cite{cs:othtoctc} and the references given there.
For basic properties of rings with these three conditions we may refer to
\cite{c:srotibp} and \cite{hv:ibnarpfr}. By finding conditions for an embedding of a (non-commutative)
ring in a skew field to be possible, P. M. Cohn has discovered the theory of free
ideal rings, and the conditions (1) - (5) above play an important role in this
theory (see, e.g., \cite{c:firaligr}). It is not at all easy to decide whether a given
ring has any one of these properties.

Given a (row-finite) directed graph $E$ and a field $K$, Abrams and Aranda Pino in
\cite{ap:tlpaoag05}, and independently Ara, Moreno, and Pardo in \cite{amp:nktfga},
introduced the \emph{Leavitt path algebra} $L_K(E)$. These Leavitt path algebras generalize
the Leavitt algebras $L_K(1, n)$ of \cite{leav:tmtoar}, and also contain many other interesting
classes of algebras. In addition, Leavitt path algebras are intimately related to graph
$C^*$-algebras (see \cite{r:ga}). In \cite{ag:lpaosg} Ara and Goodearl introduced and
investigated the \emph{Cohn path algebra} $C_K(E)$ of $E$ having coefficients in a field $K$.
Furthermore, in \cite[Theorem 1.5.17]{aam:lpa} Abrams, Ara and Siles Molina showed a perhaps-surprising
connection between Cohn and Leavitt path algebras in that every Cohn path algebra is, in fact, a Leavitt path algebra.

Recently, Kanuni and the first author \cite{ak:cpahibn} have showed that $C_K(E)$ has
IBN for every finite graph $E$. And the authors \cite{anp:lpahugn} have completely
classified those graphs $E$ for which $L_K(E)$ satisfies properties (2), (3), (4) and (5).
On the other hand, as of the writing of this article, it is an open question to give
graph-theoretic conditions on $E$ which describe precisely the Leavitt path algebras $L_K(E)$ having the
IBN property. The main goal of this note is to give a necessary and sufficient condition for
the Leavitt path algebra $L_K(E)$ having the IBN property.

The article is organized as follows. For the remainder of this introductory section we
recall the germane background information. In Section 2 we give a criterion for
the Leavitt path algebra of a finite graph having Invariant Basis Number (Theorem 2.5).
Applying the obtained result, we may cover Abrams and Kununi's result cited above (Corollary 2.7),
as well as show that the IBN property is not a Morita invariant within the class of algebras arising
as a Leavitt path algebra (Corollary 2.9). In Section 3, we establish some algebraic analogs of
Arklint and Ruiz's results, which are given in \cite[Section 3]{ar:cocka} (Lemmas 3.4 and 3.7, and
Corollary 3.9). Consequently, we may reduce the question to source-free graphs (Theorem 3.10),
and give some graphical sufficient conditions for Leavitt path algebras having Invariant Basis Number
(Corollaries 3.11 and 3.12).

Throughout this note, all rings are nonzero, associative with identity and all modules are
unitary. The set of nonnegative integers is denoted by $\mathbb{N}$, the integers by $\mathbb{Z}$.

A (directed) graph $E = (E^0, E^1, s, r)$ (or shortly $E = (E^0, E^1)$)
consists of two disjoint sets $E^0$ and $E^1$, called \emph{vertices} and \emph{edges}
respectively, together with two maps $s, r: E^1 \longrightarrow E^0$.  The
vertices $s(e)$ and $r(e)$ are referred to as the \emph{source} and the \emph{range}
of the edge~$e$, respectively. The graph is called \emph{row-finite} if
$|s^{-1}(v)|< \infty$ for all $v\in E^0$. All graphs in this paper will be assumed
to be row-finite. A graph $E$ is \emph{finite} if both sets $E^0$ and $E^1$ are finite
(or equivalently, when $E^0$ is finite, by the row-finite hypothesis).
A vertex~$v$ for which $s^{-1}(v)$ is empty is called a \emph{sink}; a vertex~$v$ for which
$r^{-1}(v)$ is empty is called a \emph{source}; a vertex~$v$ is calles an \emph{isolated vertex}
if it is both a source and a sink; and a vertex~$v$ is \emph{regular}
iff $0 < |s^{-1}(v)| < \infty$. A graph $E$ is said to be \emph{source-free} if it has no sources.

A \emph{path} $p = e_{1} \dots e_{n}$ in a graph $E$ is a sequence of
edges $e_{1}, \dots, e_{n}$ such that $r(e_{i}) = s(e_{i+1})$ for $i
= 1, \dots, n-1$.  In this case, we say that the path~$p$ starts at
the vertex $s(p) := s(e_{1})$ and ends at the vertex $r(p) :=
r(e_{n})$, and has \emph{length} $|p| := n$. We denote by $p^0$
the set of its vertices, that is, $p^0 = \{s(e_i), r(e_i)\ |\ i = 1,...,n\}$.
A path $p$ is called a \emph{cycle} if $s(p) = r(p)$, and $p$ does not revisit
any other vertex. A cycle $c$ is called a \emph{source cycle} if $|r^{-1}(v)| =1$
for all $v\in c^0$. A graph $E$ is \emph{acyclic} if it has no cycles.
An edge~$f$ is an \emph{exit} for a path $p = e_{1} \dots e_{n}$ if $s(f) = s(e_{i})$
but $f \ne e_{i}$ for some $1 \le i \le n$.

Let $E= (E^0, E^1)$ be a graph. For vertices $v, w\in E^0$, we write $v\geq w$ if there
exists a path in $E$ from $v$ to $w$, i.e., a path $p$ with $s(p) = v$ and $r(p) =w$.
Let $S$ be s subset of $E^0$. We write $v\geq S$ if there exists a $w\in S$ such that
$v\geq w$.

Let $H$ be a subset of $E^0$. The subset $H$ is called \emph{hereditary} if for all
$v\in H$ and $w\in E^0$, $v\geq w$ implies $w\in H$.

For any graph $E= (E^0, E^1)$, we denote by $A_E$ the \emph{incidence matrix} of
$E$. Formally, if $E^0 = \{v_1,..., v_n\}$, then $A_E = (a_{ij})$ the $n\times n$
matrix for which $a_{ij}$ is the number of edges having $s(e)=v_i$ and
$r(e)=v_j$. Specially, if $v_i\in E^0$ is a sink, then $a_{ij} = 0$ for all
$j=1,...,n$.

The notion of a Cohn path algebra has been defined and investigated by Ara and Goodearl \cite{ag:lpaosg}
(see, also, \cite{aam:lpa}). Specifically, for an arbitrary graph $E = (E^0,E^1,s,r)$
and an arbitrary field $K$, the \emph{Cohn path algebra} $C_{K}(E)$ of the graph~$E$
\emph{with coefficients in}~$K$ is the $K$-algebra generated
by the sets $E^0$ and $E^1$, together with a set of variables $\{e^{*}\ |\ e\in E^1\}$,
satisfying the following relations for all $v, w\in E^0$ and $e, f\in E^1$:
\begin{itemize}
\item[(1)] $v w = \delta_{v, w} w$;
\item[(2)] $s(e) e = e = e r(e)$ and
$r(e) e^* = e^* = e^*s(e)$;\item[(3)] $e^* f = \delta_{e, f} r(e)$.
\end{itemize}
Let $I$ be the ideal of $C_K(E)$ generated by all elements of the form
$v-\sum_{e\in s^{-1}(v)}ee^*$, where $v$ is a regular vertex.
Then the $K$-algebra $C_K(E)/I$ is called the \emph{Leavitt path algebra}
of $E$ with coefficients in $K$, denoted by $L_K(E)$.

Typically  the Leavitt path algebra $L_K(E)$ is defined without reference to Cohn
path algebras, rather, it is defined as the $K$-algebras are generated by
the set $\{v, e, e^*\ |\ v\in E^0, e\in E^1\}$,
which satisfy the  above conditions (1), (2), (3), and the additional condition:
\begin{itemize}
\item[(4)] $v= \sum_{e\in s^{-1}(v)}ee^*$ for any  regular vertex $v$.
\end{itemize}

If the graph $E$ is finite, both $C_K(E)$ and $L_K(E)$ are unital rings, each having
identity $1=\sum_{v\in E^0}v$ (see, e.g., \cite[Lemma 1.6]{ap:tlpaoag05}).

\section{A necessary and sufficient condition for Leavitt path algebras having invariant basis number}
In this section, we give a necessary and sufficient condition for
the Leavitt path algebra $L_K(E)$ of a finite graph $E$ with coefficients in
a field $K$ to have Invariant Basis Number. Consequently, we may get that Cohn path algebras
of finite graphs have Invariant Basis Number, which is established by Kanuni and the first
author \cite{ak:cpahibn}.

\begin{defn}
A ring $R$ is said to have \emph{Invariant Basis Number} (for short, \emph{IBN}) if,
for any pair of positive integers $m \text{ and } n$, $R^m\cong R^n$ (as right modules) implies that $m=n$.\hfill$\square$
\end{defn}

For any ring $R$ we denote by $\mathcal{V}(R)$ the set of isomorphism classes
(denoted by $[P]$) of finitely generated projective right $R$-modules, and we endow
$\mathcal{V}(R)$ with the structure of an abelian monoid by imposing the operation:
$$[P] + [Q] = [P\oplus Q]$$ for any isomorphism classes $[P]$ and $[Q]$. We note the following
easily verified equivalent characterizations of the IBN property.

\begin{rem}
The following conditions are equivalent for any ring $R$:

(1) $R$ has Invariant Basis Number;

(2) For any pair of positive integers $m \text{ and } n$, $m[R] = n[R]$ in $\mathcal{V}(R)$ implies that $m = n$;

(3) For any $A\in M_{m\times n}(R)$ and $B\in M_{n\times m}(R)$, if $AB= I_m$ and $BA=I_n$, then
$n= m$.\hfill$\square$
\end{rem}

One advantage of condition (3) in Remark 2.2 is that it involves neither left
nor right modules. In particular, the IBN property is indeed a left-right symmetric
condition in general.

The description of the monoid of isomorphism classes of finitely
generated projective modules of Leavitt path algebras which is due to Ara, Moreno and
Pardo \cite{amp:nktfga}. Namely, following \cite{amp:nktfga}, for any directed graph
$E=(E^0, E^1, s, r)$ we define the monoid $M_E$ as follows. We denote by $T$ the free abelian
monoid (written additively) with generators $E^0$. Define relations on $T$ by setting
\begin{center}
$v = \sum_{e\in s^{-1}(v)}r(e)\quad\quad\quad\quad\quad\quad\quad\quad\quad (M)$
\end{center}
for every regular vertex $v\in E^0$.
Let $\sim_{E}$ be the congruence relation on $T$ generated by these relations.
Then $M_E = T/_{\sim_E}$, and we also denote an element of $M_E$ by $[x]$,
where $x\in T$. In \cite[Theorem 3.5]{amp:nktfga} Ara, Moreno and Pardo
proved the following important result.

\begin{thm}[{\cite[Theorem 3.5]{amp:nktfga}}]
Let $E = (E^0, E^1)$ be a finite graph and $K$ an arbitrary field. Then the map
$[v]\longmapsto [vL_{K}(E)]$ yields an isomorphism of abelian monoids $M_E\cong \mathcal{V}(L_{K}(E))$.
In particular, under this isomorphism, we have $[\sum_{v\in E^0}v]\longmapsto [L_{K}(E)]$.
\end{thm}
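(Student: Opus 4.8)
The plan is to show that the displayed assignment extends to a well-defined monoid homomorphism $\phi\colon M_E\to\mathcal V(L_K(E))$ and that this homomorphism is bijective, with injectivity as the crux. First I would set $\phi([v])=[vL_K(E)]$ and verify that the defining relations $(M)$ are respected. Since $E$ is finite, $E^0$ is a set of pairwise orthogonal idempotents with $\sum_{v\in E^0}v=1$, so each $vL_K(E)$ is a finitely generated projective right module. For an edge $e$ the element $ee^*$ is idempotent (using $e=er(e)$ and, from relation $(3)$, $e^*e=r(e)$), and $ee^*$ is Murray--von Neumann equivalent to $e^*e=r(e)$ through the pair $e,e^*$, so $[ee^*L_K(E)]=[r(e)L_K(E)]$. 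For a regular vertex $v$, relation $(4)$ writes $v=\sum_{e\in s^{-1}(v)}ee^*$ as a sum of pairwise orthogonal idempotents (because $e^*f=\delta_{e,f}r(e)$), whence $vL_K(E)=\bigoplus_{e\in s^{-1}(v)}ee^*L_K(E)$ and $[vL_K(E)]=\sum_{e\in s^{-1}(v)}[r(e)L_K(E)]$. This is precisely the image of $(M)$, so $\phi$ is a well-defined homomorphism of abelian monoids; and the final clause is immediate, since $\phi([\sum_{v\in E^0}v])=\sum_{v\in E^0}[vL_K(E)]=[\bigoplus_{v\in E^0}vL_K(E)]=[L_K(E)]$.

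It remains to prove that $\phi$ is surjective and injective, and I would obtain both at once from a structural computation of $\mathcal V(L_K(E))$. The idea is to realize $L_K(E)$ as a universal localization of the path algebra $KE$, which is hereditary and whose monoid $\mathcal V(KE)$ is the free abelian monoid on $E^0$ (the modules $vKE$ being the indecomposable projectives). For each regular vertex $v$ the edges with source $v$ determine a morphism $\bigoplus_{e\in s^{-1}(v)}r(e)(KE)\to v(KE)$ of finitely generated projective right modules, and the universal localization of $KE$ inverting all of these morphisms is isomorphic to $L_K(E)$, the entries of the adjoined inverses being exactly the new generators $e^*$. Invoking Bergman's description of $\mathcal V$ for universal localizations of a hereditary ring, the canonical map identifies $\mathcal V(L_K(E))$ with the quotient of $\mathcal V(KE)$ by the congruence generated by the relations $[v]=\sum_{e\in s^{-1}(v)}[r(e)]$ read off from the inverted morphisms. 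As $\mathcal V(KE)$ is free on $E^0$, this quotient is by definition $M_E$, and one checks that the induced isomorphism is precisely the map $\phi$ constructed above.

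The main obstacle is this last step: proving that passing to the universal localization introduces no identifications in $\mathcal V$ beyond those forced by $(M)$ --- equivalently, that $\phi$ is injective. A naive generators-and-relations comparison cannot exclude extra collapsing, so the argument genuinely relies on the functorial behaviour of $\mathcal V$ under universal localization together with the hereditary hypothesis, which is exactly the content of Bergman's theorem. In carrying this out one also uses, on the combinatorial side, that $M_E$ is a refinement monoid, which lets one bring an arbitrary element of $M_E$ to a canonical form and match it against an explicit idempotent of $L_K(E)$; this refinement property is the technical ingredient that makes the monoid identification tractable.
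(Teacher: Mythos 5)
The paper offers no proof of this statement: it is quoted verbatim as \cite[Theorem 3.5]{amp:nktfga}, so there is no internal argument to compare against. Your sketch is essentially the proof given in that source. The well-definedness half is correct and complete as stated: $ee^*$ and $e^*e=r(e)$ are Murray--von Neumann equivalent, the idempotents $ee^*$ for $e\in s^{-1}(v)$ are orthogonal, and this shows the relations $(M)$ are respected, with the final clause following from $\sum_{v\in E^0}v=1$. For the bijectivity, Ara, Moreno and Pardo do exactly what you describe in spirit, but with a slightly different bootstrapping: rather than presenting $L_K(E)$ as a universal localization of the path algebra $KE$, they build it from the semisimple algebra $K^{E^0}$ (whose $\mathcal{V}$ is visibly free on $E^0$) by iteratively applying Bergman's construction of adjoining a universal isomorphism between the finitely generated projectives corresponding to $v$ and $\bigoplus_{e\in s^{-1}(v)}r(e)$; Bergman's theorem then says each step is again hereditary and changes $\mathcal{V}$ by exactly the one relation $[v]=\sum_{e\in s^{-1}(v)}[r(e)]$, which yields $M_E$ with no extra collapsing. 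Your universal-localization route is also viable but carries two extra obligations you would need to discharge (that $\mathcal{V}(KE)$ is free on the $vKE$, and that $L_K(E)$ really is that localization). Finally, the refinement property of $M_E$ is a consequence established after this theorem in \cite{amp:nktfga}, not an ingredient of its proof, so your last sentence overstates its role.
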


Applying Theorem 2.3 and Remark 2.2(2), we immediately get the following corollary, which
provides us with a criterion to check the IBN property of $L_K(E)$ in terms of the monoid $M_E$.

\begin{cor}\label{cor4.2}
Let $E = (E^0, E^1)$ be a finite graph and $K$ any field.
Then the following conditions are equivalent:

(1) $L_K(E)$ has Invariant Basis Number;

(2) For any pair of positive integers $m$ and $n$,
\begin{center}
if $\ m[\sum_{v\in E^0}v] = n[\sum_{v\in E^0}v]$ in $M_E$, then $m = n$.
\end{center}
\end{cor}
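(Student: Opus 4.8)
The plan is to derive Corollary 2.5 as a direct consequence of Theorem 2.3 together with the characterization of IBN recorded in Remark 2.2(2). The essential observation is that Theorem 2.3 supplies a monoid isomorphism $\Phi\colon M_E \to \mathcal{V}(L_K(E))$ sending $[v]$ to $[vL_K(E)]$, and moreover identifying the distinguished generator $[\sum_{v\in E^0}v]$ of $M_E$ with the class $[L_K(E)]$ of the free rank-one module. Since $\Phi$ is a monoid homomorphism, it is additive, so for every positive integer $m$ we have $\Phi\bigl(m[\sum_{v\in E^0}v]\bigr) = m\,\Phi\bigl([\sum_{v\in E^0}v]\bigr) = m[L_K(E)]$ in $\mathcal{V}(L_K(E))$.

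With this translation in hand I would argue the equivalence of the two conditions. First I would invoke Remark 2.2(2), which states that $L_K(E)$ has IBN if and only if, for all positive integers $m$ and $n$, the equality $m[L_K(E)] = n[L_K(E)]$ in $\mathcal{V}(L_K(E))$ forces $m = n$. Then, because $\Phi$ is an isomorphism of abelian monoids, it is in particular injective, so an equation $m[\sum_{v\in E^0}v] = n[\sum_{v\in E^0}v]$ holds in $M_E$ if and only if its image $m[L_K(E)] = n[L_K(E)]$ holds in $\mathcal{V}(L_K(E))$. Chaining these two biconditionals yields precisely the asserted equivalence of conditions (1) and (2).

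Concretely, the proof is a short two-way implication. For $(1)\Rightarrow(2)$, assume $L_K(E)$ has IBN and suppose $m[\sum_{v\in E^0}v] = n[\sum_{v\in E^0}v]$ in $M_E$; applying $\Phi$ gives $m[L_K(E)] = n[L_K(E)]$, whence $m = n$ by Remark 2.2(2). For $(2)\Rightarrow(1)$, assume condition (2) and suppose $m[L_K(E)] = n[L_K(E)]$ in $\mathcal{V}(L_K(E))$; applying the inverse isomorphism $\Phi^{-1}$ (or equivalently using injectivity of $\Phi$) returns $m[\sum_{v\in E^0}v] = n[\sum_{v\in E^0}v]$ in $M_E$, so $m = n$ by hypothesis, and Remark 2.2(2) then gives that $L_K(E)$ has IBN.

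There is essentially no hard obstacle here, since both of the key inputs are already available: the structural content is entirely carried by Theorem 2.3, and the reformulation of IBN is exactly Remark 2.2(2). The only point requiring a moment of care is the faithful use of the second clause of Theorem 2.3, namely the identification $[\sum_{v\in E^0}v] \mapsto [L_K(E)]$, which is what allows the free module $R^m = L_K(E)^m$ to be matched with the monoid element $m[\sum_{v\in E^0}v]$; without this identification one would only know that $\Phi$ sends the generators $[v]$ to the classes $[vL_K(E)]$, and the translation of the IBN condition would not be immediate.
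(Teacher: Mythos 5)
Your proposal is correct and follows exactly the route the paper intends: the paper derives this corollary immediately from Theorem 2.3 together with Remark 2.2(2), and your argument simply writes out in full the transfer of the condition in Remark 2.2(2) across the isomorphism $M_E\cong\mathcal{V}(L_K(E))$ using the identification $[\sum_{v\in E^0}v]\mapsto[L_K(E)]$. No differences of substance.
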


We are now in position to give a necessary and sufficient condition for the Leavitt path algebra
of a finite graph to have Invariant Basis Number. To do so, we recall an important property of the
monoid $M_E$ as follows. Let $E$ be a finite graph having $|E^0| = h$, and regular (i.e., non-sink)
vertices $\{v_i\ |\ 1\leq i\leq z\}$. For $x = n_1v_1 + ... + n_hv_h\in T$ (the free abelian monoid
on generating set $E^0$), and $1\leq i\leq z$, let $M_i(x)$ denote the
element of $T$ which results by applying to $x$ the relation $(M)$ corresponding to vertex
$v_i$. For any sequence $\sigma$ taken from $\{1, 2,...,z\}$, and any $x\in T$, let $\Lambda_{\sigma}(x)\in T$
be the element which results by applying $M_i$ operations in the order specified by $\sigma$.\medskip

\noindent \textbf{The Confluence Lemma.} (\cite[Lemma 4.3]{amp:nktfga}) For each pair $x,\ y\in T$, $[x] = [y]$
in $M_E$ if and only if there are two sequences $\sigma$ and $\sigma'$ taken from $\{1, 2,...,z\}$
such that $\Lambda_{\sigma}(x)= \Lambda_{\sigma'}(y)$ in $T$.

\begin{thm}
Let $E$ be a finite graph having vertices $\{v_i\ |\ 1\leq i\leq h\}$ such that the regular
vertices appear as $v_1,..., v_z$. Let 
\[J_E = \left(
    \begin{array}{cc}
      I_z & 0 \\
      0 & 0 \\
    \end{array}
  \right)
\in M_{h}(\mathbb{N})\ \text{ and }\ b= [1 \ ...\ 1]^t
\in M_{h\times 1}(\mathbb{N}),\] and $[A^t_E - J_E \ \ b]$ the matrix gotten from
the matrix $A^t_E - J_E$ by adding the column $b$. Let $K$ be an arbitrary field.
Then $L_K(E)$ has Invariant Basis Number if and only if \[\mathrm{rank}(A^t_E - J_E)<
\mathrm{rank}([A^t_E - J_E \ \ b]).\]
\end{thm}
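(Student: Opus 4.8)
The plan is to combine the monoid criterion of Corollary~\ref{cor4.2} with the Confluence Lemma and then to recast the resulting combinatorial condition as a question about the $\mathbb{Q}$-column space of $A_E^t - J_E$. Write $\mathbf{1} = \sum_{v\in E^0} v \in T$ and identify each $x = \sum_{i=1}^h n_i v_i \in T$ with its coefficient vector $(n_1,\dots,n_h)^t \in \mathbb{N}^h$; under this identification $\mathbf{1}$ becomes the column $b$. The first step is to record the effect of the operations $M_i$ on coefficient vectors. Applying the relation $(M)$ at a regular vertex $v_i$ deletes one copy of $v_i$ and inserts $\sum_j a_{ij} v_j$, so it alters the coefficient vector by $\sum_j a_{ij} e_j - e_i$, which is exactly the $i$-th column of $A_E^t - J_E$ (here one uses $J_E e_i = e_i$ for $1 \le i \le z$). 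Since a sink $v_i$ gives a zero row of $A_E$ and carries no relation, the $i$-th column of $A_E^t - J_E$ vanishes for $i > z$. Hence, for every $x \in T$ and every sequence $\sigma$ in which $M_i$ is used $k_i$ times, one has $\Lambda_\sigma(x) = x + (A_E^t - J_E)k$ with $k = (k_1,\dots,k_h)^t \in \mathbb{N}^h$ (and $k_i = 0$ for sinks).

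By Corollary~\ref{cor4.2}, $L_K(E)$ fails to have IBN precisely when $[m\,\mathbf{1}] = [n\,\mathbf{1}]$ in $M_E$ for some distinct positive integers $m, n$. Combining the Confluence Lemma with the preceding paragraph, this holds if and only if there are distinct positive integers $m, n$ and vectors $k, k' \in \mathbb{N}^h$ with $m\,b + (A_E^t - J_E)k = n\,b + (A_E^t - J_E)k'$. The easy implication follows at once: rearranging yields $(m-n)\,b = (A_E^t - J_E)(k'-k)$, so $b$ lies in the $\mathbb{Q}$-linear span of the columns of $A_E^t - J_E$, giving $\mathrm{rank}(A_E^t - J_E) = \mathrm{rank}([A_E^t - J_E\ \ b])$. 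Contrapositively, the asserted strict rank inequality forces $L_K(E)$ to have IBN.

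For the converse I would assume $b$ lies in the $\mathbb{Q}$-column space of $A_E^t - J_E$ and build an explicit confluence. Clearing denominators gives a positive integer $N$ and an integer vector $d$, which may be taken supported on the regular coordinates since the sink columns vanish, with $N\,b = (A_E^t - J_E)d$. Writing $d = d^+ - d^-$ with $d^+, d^- \in \mathbb{N}^h$ and setting $m = n + N$, one checks directly that applying the operations with multiplicities $d^-$ to $m\,\mathbf{1}$ and with multiplicities $d^+$ to $n\,\mathbf{1}$ both produce the element $(n+N)\,b + (A_E^t - J_E)d^-$ of $T$; indeed $(A_E^t - J_E)d^+ = N\,b + (A_E^t - J_E)d^-$. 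Since $m = n + N \ne n$, this gives $[m\,\mathbf{1}] = [n\,\mathbf{1}]$ with $m \ne n$, so $L_K(E)$ fails IBN.

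The delicate point---and the step I expect to be the main obstacle---is to verify that the two prescribed sequences of $M$-operations are genuinely \emph{applicable}, meaning that each time $M_i$ is invoked a copy of $v_i$ is present. Here I would exploit the monotonicity inherent in the relations: for $j \ne i$ the application of $M_j$ changes the count of $v_i$ by $a_{ji} \ge 0$ and so never decreases it, whence the number of copies of $v_i$ can only be lowered by $M_i$ itself. Starting from a sufficiently large multiple $n\,\mathbf{1}$---any $n$ larger than the total number of operations occurring in either sequence suffices---therefore ensures that every vertex remains available at each step, so both sequences can be carried out (for instance one vertex-block at a time). Choosing such an $n$ completes the proof.
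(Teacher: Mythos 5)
Your argument is correct and follows essentially the same route as the paper: both directions reduce, via Corollary 2.4 and the Confluence Lemma, to the solvability in integers of $(A_E^t-J_E)\mathrm{x}=(m-n)b$, which is governed exactly by the stated rank condition. The only real difference is in execution of the converse, where the paper produces an integer solution explicitly by row-reducing $[A_E^t-J_E\ \ b]$ to echelon form while you simply clear denominators from a rational solution; your monotonicity argument that the two operation sequences are genuinely applicable once $n$ is large enough is in fact a more careful justification than the paper's one-line remark to the same effect.
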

\begin{proof}
$(\Longleftarrow )$. Assume that $\mathrm{rank}(A^t_E - J_E)<\mathrm{rank}([A^t_E - J_E \ \ b])$;
we prove that $L_K(E)$ has Invariant Basis Number. We use Corollary 2.4 to do so. Namely, let $m$ and $n$
be positive integers such that
\begin{center}
$m[\sum_{i=1}^hv_i]=n[\sum_{i=1}^hv_i]$ in $M_E$.
\end{center}
We must show that $m = n$. By the Confluence Lemma and the hypothesis
$m[\sum_{i=1}^hv_i]=n[\sum_{i=1}^hv_i]$, there are two sequences
$\sigma$ and $\sigma'$ for which
$$\Lambda_{\sigma}(m\sum_{i=1}^hv_i)=\gamma=\Lambda_{\sigma'}(n\sum_{i=1}^hv_i)$$
for some $\gamma\in T$. But each time a substitution of the form
$M_j \ (1\leq j\leq z)$ is made to an element of $T$, the effect on
that element is to:
\begin{itemize}
\item[(i)] subtract $1$ from the coefficient on $v_j$;
\item[(ii)] add $a_{ji}$ to the coefficient on $v_i$ (for $1\leq i\leq h$).
\end{itemize}
For each $1\leq j\leq z$, denote the number of times that $M_j$ are invoked in
$\Lambda_{\sigma}$ and $\Lambda_{\sigma'}$ by $k_j$ and $k'_j$,
respectively. Recalling the previously observed effect of $M_j$ on
any element of $T$, we see that
\begin{equation*}
\begin{array}{rcl}
\gamma &=& \Lambda_{\sigma}(m\sum_{i=1}^hv_i)\\
&=& ((m-k_1)+ a_{11}k_1+a_{21}k_2+...+a_{z1}k_z)v_1\\
&&+ ((m-k_2)+ a_{12}k_1+a_{22}k_2+...+a_{z2}k_z)v_2+...\\
&&+ ((m-k_z)+ a_{1z}k_1+a_{2z}k_2+...+a_{zz}k_z)v_z\\
&&+ (m+  a_{1(z+1)}k_1+a_{2(z+1)}k_2+...+a_{z(z+1)}k_z)v_{z+1} +...\\
&&+ (m+a_{1h}k_1+a_{2h}k_2+...+a_{zh}k_z)v_h .
\end{array}
\end{equation*}
On the other hand, we have
\begin{equation*}
\begin{array}{rcl}
\gamma &=& \Lambda_{\sigma'}(n\sum_{i=1}^hv_i)\\
&=& ((n-k'_1)+ a_{11}k'_1+a_{21}k'_2+...+a_{z1}k'_t)v_1\\
&& + ((n-k'_2)+ a_{12}k'_1+a_{22}k'_2+...+a_{z2}k'_z)v_2+...\\
&& + ((n-k'_z)+ a_{1z}k'_1+a_{2z}k'_2+...+a_{zz}k'_z)v_z\\
&& + (n +  a_{1(z+1)}k'_1 + a_{2(z+1)}k'_2\ +... + a_{z(z+1)}k'_z)v_{z+1} + ... \\
&& + (n+a_{1h}k'_1+a_{2h}k'_2+...+a_{zh}k'_z)v_h .
\end{array}
\end{equation*}For each $1\leq j\leq z$, define $m_i = k'_i - k_i$. Then from the above observations,
equating coefficients on the free generators $T$, we get the following system of equations:
\begin{equation}\label{1}
\left\{
\begin{array}{rcl}
m-n & = &(a_{11}-1)m_1+a_{21}m_2+...+a_{z1}m_z \\
m-n & = & a_{12}m_1+(a_{22}-1)m_2+...+a_{z2}m_z \\
& ... & \\
m-n & = & a_{1z}m_1+a_{2z}m_2+...+ (a_{zz}-1)m_z\\
m-n & = & a_{1(z+1)}m_1+a_{2(z+1)}m_2+...+a_{z(z+1)}m_z \\
 & ... & \\
m-n & = & a_{1h}m_1+a_{2h}m_2+...+a_{zh}m_z \\
\end{array}
\right.
\end{equation}
In other words, the $h$-tuple $(m_1,..., m_z, 0,..., 0) \in \mathbb{Z}^h$ is a solution of
the following linear system: \[(A^t_E - J_E)\mathrm{x} = (m-n)b,\] where $\mathrm{x}
= [x_1\ ...\ x_h]^t$ is the unknown vector. This implies that
\[\mathrm{rank}(A^t_E - J_E)= \mathrm{rank}([A^t_E - J_E \ \ (m-n)b]).\]

If $m-n\neq 0$, then we have obviously that \[\mathrm{rank}([A^t_E - J_E \ \ (m-n)b]) =
\mathrm{rank}([A^t_E - J_E \ \ b]),\] so $\mathrm{rank}(A^t_E - J_E)= \mathrm{rank}([A^t_E - J_E \ \ b]),$
contradicting the above hypothesis; which gives $m = n$. Therefore, $L_K(E)$ has Invariant
Basis Number.

$(\Longrightarrow)$. Assume conversely that $\mathrm{rank}(A^t_E - J_E)= \mathrm{rank}([A^t_E - J_E \ \ b])$.
We will prove that $L_K(E)$ does not have Invariant Basis Number, that means, we have to find
a pair of distinct positive integers $m$ and $n$ such that

$$m[\sum_{i=1}^hv_i]=n[\sum_{i=1}^hv_i]$$ in $M_E$. Equivalently, arguing as in the previous half
of the proof, we show that we can find a pair of distinct positive integers $m \text{ and } n$, and
nonnegative integers $k_j,\ k'_j\ (j= 1,..., z)$ such that
\begin{equation}\label{2}
\left\{
\begin{array}{rcl}
m-n & = &(a_{11}-1)m_1+a_{21}m_2+...+a_{z1}m_z \\
m-n & = & a_{12}m_1+(a_{22}-1)m_2+...+a_{z2}m_z \\
& ... & \\
m-n & = & a_{1z}m_1+a_{2z}m_2+...+ (a_{zz}-1)m_z\\
m-n & = & a_{1(z+1)}m_1+a_{2(z+1)}m_2+...+a_{z(z+1)}m_z \\
 & ... & \\
m-n & = & a_{1h}m_1+a_{2h}m_2+...+a_{zh}m_z \\
\end{array}
\right.
\end{equation}
where $m_j :=k'_j-k_j$ for all $j = 1, ..., z$.
It is customary to identify this system of linear equations with the
matrix-vector equation \[(A^t_E - J_E)\mathrm{x} = (m-n)b,\] where
$\mathrm{x} = [m_1,..., m_z, 0,..., 0]\in \mathbb{Z}^h$.

We now choose the above integers as follows: By $\mathrm{rank}(A^t_E - J_E)=
\mathrm{rank}([A^t_E - J_E \ \ b])=:r\leq z$, after finite numbers of
elementary row transformations, $[A^t_E - J_E \ \ b]$ can be brought to the form:

$$\left(\begin{tabular}{cccccccccccccc}
0&...&0&$a_{1j_1}$&...&$a_{1(j_2-1)}$&0          &$a_{1(j_2+1)}$&...&$a_{1(j_r-1)}$&0          &...&$b_1$\\
0&...&0&0                      &...&0            &$a_{2 j_2}$&$a_{2(j_2+1)}$&...&$a_{2(j_r-1)}$&0          &...&$b_2$\\
.&.  &.&.                     &.  &.            &.          &.            &.  &.            &.          &.  &.  \\
0&...&0&0                     &...&0            &0          &0            &...&0            &$a_{rj_r}$&...&$b_r$\\
0&...&0&0                      &...&0            &0          &0            &...&0            &0          &...&0         \\
.&.  &.&.                     &.  &.            &.          &.            &.  &.            &.          &.  &.         \\
0&...&0&0                      &...&0            &0          &0            &...&0            &0          &...&0         \\
\end{tabular}\right)$$
where the entries are integers, $j_1<j_2<...<j_r$, $a_{1j_1}a_{2j_2}...a_{rj_r}\neq 0$ and
$\sum^r_{i = 1}b^2_i \neq 0$ (i.e., $b_i$'s are not equal to zero, simultaneously). We then choose the
integers $m_j$, $n$ and $m$ as follows:

\begin{equation*}
m_j :=  \left\{
\begin{array}{lcl}
\frac{b_j|a_{1j_1}a_{2j_2}...a_{rj_r}|}{a_{ij_i}}&  & \text{if } j = j_i\ (1\leq i\leq r) , \\
&  &  \\
0&  & \text{otherwise \ \ }%
\end{array}%
\right.
\end{equation*}%

$$n :=\max\{|m_j|\mid j = 1,...,h\} \text{ and } m:= |a_{1j_1}a_{2j_2}...a_{rj_r}| + n.$$

Finally, positive integers $k_j$ and $k'_j$ $(j = 1, ..., t)$ are
chosen by the rule:

\begin{equation*}
(k'_j, k_j) :=  \left\{
\begin{array}{lcl}
(0, 0)&  & \text{if } m_j = 0 , \\
&  &  \\
(m_j, 0)& & \text{if } m_j > 0 ,\\
& & \\
(0, - m_j)&  & \text{if } m_j < 0.%
\end{array}%
\right.
\end{equation*}%
Then, a tedious but straightforward computation yields that this
choice of integers indeed satisfies the system of equations (2) above.
Also, notice that we always have  $k'_j, k_j\leq n< m$ (for all $j = 1,..., h$),
so  the sequences $\sigma$ and $\sigma'$  which are produced this way can not lead
to a situation where we have some elements in the monoid $T$ where the coefficient
on a vertex is negative, thus completing the proof of the theorem.
\end{proof}

\begin{exas}
We present a specific example of the construction presented in the proof of Theorem 2.5
which shows that graphs satisfying $\mathrm{rank}(A^t_E - J_E)=\mathrm{rank}([A^t_E - J_E \ \ b])$
do not have IBN. Let $K$ be a field and let $E$ be the graph

$$\xymatrix{\bullet^{v_1}\ar@(ul,ur) \ar@(dr,dl) \ar[r]& \bullet^{v_2} \ar@(ul,ur) \ar[r]& \bullet^{v_3}}.$$
\\ We then have

$$A^t_E=\left(\begin{tabular}{ccc}
2&0&0\\
1&1&0\\
0&1&0\\
\end{tabular}\right) \text{ and } J_E=\left(\begin{tabular}{ccc}
1&0&0\\
0&1&0\\
0&0&0\\
\end{tabular}\right),$$ so
$$[A^t_E - J_E \ \ b] = \left(
    \begin{array}{cccc}
      1 & 0 & 0 & 1 \\
      1 & 0 & 0 & 1 \\
      0 & 1 & 0 & 1 \\
    \end{array}
  \right).
$$
\end{exas}

Clearly, $\mathrm{rank}(A^t_E - J_E)=2=\mathrm{rank}([A^t_E - J_E \ \ b])$, and
$[A^t_E - J_E \ \ b]$ can be brought to the form:
\[\left(
    \begin{array}{cccc}
      1 & 0 & 0 & 1 \\
      0 & 1 & 0 & 1 \\
      0 & 0 & 0 & 0 \\
    \end{array}
  \right),\] so $j_1 = 1, j_2 = 2$, $a_{11} =1 = a_{22}$ and $b_1 =1=b_2$.

As in the proof of Theorem 2.5, we define $m_j\ (j = 1, 2)$, $n$ and $m$ as follows:
\[m_1 = \frac{b_1|a_{11}a_{22}|}{a_{11}} = 1,\ \ m_2 = \frac{b_2|a_{11}a_{22}|}{a_{22}} = 1\]
\[n= \max\{m_1, m_2\} = 1 \text{ and } m = |a_{11}a_{22}| + n = 2.\]
Subsequently, we define \[k'_1 = m_1 = 1 \text{ and } k_1 = 0\]
\[k'_2 = m_2 = 1 \text{ and } k_2 = 0.\] Then the construction described in the proof of
Theorem 2.5 yields \[2[\sum^3_{i=1}v_i] = [\sum^3_{i=1}v_i]\] in $M_E$. Equivalently,
we achieve this by verifying the equivalent version \[[2v_1 + 2v_2 + 2v_3] = [v_1 + v_2 + v_3]\]
in $M_E$. In $M_E$ we have:
\[(\mathrm{i}) \ \ [v_1] = [2v_1 + v_2],\ \  \text{ and } \ \  (\mathrm{ii}) \ \ [v_2] = [v_2 + v_3].\]
The right side can be transformed as follows:

\begin{equation*}
\begin{array}{rcl}
[v_1 + v_2 + v_3] &=& [2v_1 + v_2 + v_2 + v_3]\  \ \ \ \ \ \ \ \ \ \ \ \text{by} \ (i)\\
&=& [2v_1 + 2v_2 + v_3]\\
&=&[2v_1 + v_2 + v_2 +v_3+ v_3] \ \ \ \ \ \text{by}\  (ii)\\
&=&[2v_1 + 2v_2 + 2v_3].
\end{array}
\end{equation*}
This completes the verification that the two quantities are indeed equal in $M_E$, which shows
that $L_K(E)$ does not have Invariant Basis Number.\hfill$\square$\medskip

Now we provide a few remarks about Cohn path algebras.
We represent a specific case of a more general result described in \cite[Section 1.5]{aam:lpa}. Namely,
let $E = (E^0, E^1, s, r)$ be an arbitrary graph and $Y$ the set of regular vertices of $E$.
Let $Y'=\{v'\ |\ v\in Y\}$ be a disjoint copy of $Y$. For $v\in Y$ and for each edge $e$ in $E^1$ such
that $r_E(e)=v$, we consider a new symbol $e'$. We define the graph $F(E)$ as follows:
$$F(E)^0 := E^0\sqcup Y' \text{ and } F(E)^1 :=E^1\sqcup \{e'\ |\ r_E(e)\in
Y\},$$ and for each $e\in E^1$, $s_{F(E)}(e)=s_E(e),\ s_{F(E)}(e')=s_E(e),\
r_{F(E)}(e)=r_E(e)$, and $r_{F(E)}(e')=r_E(e)'$. For instance, if

$$E = \xymatrix{\bullet^{v}\ar@(ul,ur)^e \ar@(dr,dl)^f},\ \ \text{ then } \ \
F(E) = \xymatrix{\bullet^{v}\ar@(ul,ur)^e \ar@(dr,dl)^f \ar@/_.5pc/[r]_{f'} \ar@/^.5pc/[r]^{e'}& \bullet^{v'}}.$$
In \cite[Theorem 1.5.17]{aam:lpa}, Ara, Siles Molina and the first author showed that for any field $K$
and any graph $E$, there is an isomorphism of $K$-algebras $C_K(E)\cong L_K(F(E))$.

In \cite[Theorem 9]{ak:cpahibn} Kanuni and the first author showed that
the Cohn path algebra of a finite graph has Invariant Basis Number. Now we will provide
another proof for this interesting result in terms of Theorem 2.5.

\begin{cor}[{\textit{cf.}~\cite[Theorem 9]{ak:cpahibn}}]
Let $E$ be a finite graph and $K$ a field. Then $C_K(E)$ has Invariant Basis Number.
\end{cor}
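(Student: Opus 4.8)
The plan is to use the isomorphism $C_K(E)\cong L_K(F(E))$ recorded just above: since IBN is clearly preserved by ring isomorphisms, it suffices to prove that the Leavitt path algebra $L_K(F(E))$ of the finite graph $F(E)$ has IBN, and for this I would apply the matrix criterion of Theorem~2.5 to $F(E)$. Thus the whole problem reduces to verifying the strict rank inequality $\mathrm{rank}(A^t_{F(E)}-J_{F(E)})<\mathrm{rank}([A^t_{F(E)}-J_{F(E)}\ \ b])$.

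First I would determine the shape of $F(E)$. Write $v_1,\dots,v_z$ for the regular vertices of $E$ and $v_{z+1},\dots,v_h$ for its sinks. The construction adjoins one new vertex $v'_i$ for each regular $v_i$; each $v'_i$ receives the edges $e'$ but emits none, so $v'_1,\dots,v'_z$ are sinks of $F(E)$, and the regular vertices of $F(E)$ are again exactly $v_1,\dots,v_z$. Ordering the vertices of $F(E)$ as $v_1,\dots,v_h,v'_1,\dots,v'_z$, the definition of the edges $e'$ (namely $s_{F(E)}(e')=s_E(e)$ and $r_{F(E)}(e')=r_E(e)'$) gives the block forms
\[
A_{F(E)}=\begin{pmatrix}A_E & B\\ 0 & 0\end{pmatrix},
\qquad
A^t_{F(E)}-J_{F(E)}=\begin{pmatrix}A^t_E-J_E & 0\\ B^t & 0\end{pmatrix},
\]
where $B$ is the $h\times z$ block consisting of the first $z$ columns of $A_E$, so that $B^t$ is the first $z$ rows of $A^t_E$, and where the last $z$ columns vanish because the $v'_i$ are sinks. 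Since appending zero columns leaves the column space unchanged, Theorem~2.5 applies to $F(E)$ precisely when the all-ones vector $b=[1\ \cdots\ 1]^t\in\mathbb{Z}^{h+z}$ fails to lie in the column space of $N:=\begin{pmatrix}A^t_E-J_E\\ B^t\end{pmatrix}$.

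The crux is then a short linear-algebra computation showing $b\notin\mathrm{colspace}(N)$. Suppose to the contrary that $N\mathrm{c}=b$ for some $\mathrm{c}=[c_1\ \cdots\ c_h]^t\in\mathbb{Q}^h$. For each $1\le i\le z$ the $i$-th row of $A^t_E-J_E$ equals the $i$-th row of $A^t_E$ with $1$ subtracted in position $i$, whereas the $i$-th row of $B^t$ is exactly the $i$-th row of $A^t_E$; subtracting the two equations (both of which read $=1$) forces $c_i=0$ for every $1\le i\le z$. Substituting this back into the $B^t$-equation indexed by any such $i$ gives $\sum_{j=1}^{h}a_{ji}c_j=1$, but here the terms with $j\le z$ vanish since $c_j=0$, and those with $j>z$ vanish since each $v_j$ with $j>z$ is a sink (so $a_{ji}=0$), yielding $0=1$, a contradiction. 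Hence $b\notin\mathrm{colspace}(N)$, the strict rank inequality holds, and $L_K(F(E))\cong C_K(E)$ has IBN. (In the degenerate case $z=0$ every vertex of $E$ is a sink, so $A^t_E-J_E=0$ and $b\neq 0$ lies trivially outside the zero column space; this can be disposed of first.)

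I expect the only real obstacle to be the bookkeeping: fixing the vertex ordering, the transpose conventions, and the exact placement of the block $B^t$ so that the decisive identity ``the $v'_i$-row minus the $v_i$-row equals the standard vector $e_i$'' emerges cleanly. Once that identity is secured, the collapse to $0=1$ is immediate, and no appeal to the Confluence Lemma or to the explicit construction of unequal $m,n$ from the converse half of Theorem~2.5 is required.
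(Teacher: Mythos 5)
Your proposal is correct and follows essentially the same route as the paper: reduce via $C_K(E)\cong L_K(F(E))$, identify the block structure of $A^t_{F(E)}-J_{F(E)}$ (with the rows indexed by the new sinks $v'_i$ duplicating the first $z$ rows of $A^t_E$), and exploit the key identity that the $v'_i$-row minus the $v_i$-row is the standard basis vector $e_i$. The paper phrases this as an explicit row reduction showing the ranks are $z$ and $z+1$, while you phrase it as $b\notin\mathrm{colspace}$ via a contradiction; these are the same computation.
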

\begin{proof}
We first have that $C_K(E)\cong L_K(F(E))$, where $F(E)$ is the graph described above.
We next prove that the graph $F(E)$ satisfies the condition that
\[\mathrm{rank}(A^t_{F(E)} - J_{F(E)})<\mathrm{rank}([A^t_{F(E)} - J_{F(E)} \ \ b]).\]
Indeed, we denote $E^0$ by $\{v_i\ |\ 1\leq i\leq h\}$, in such a way that the regular
vertices appear as $v_1,..., v_z$. We then have that $F(E)^0 = \{v_1,..., v_h, v'_1,..., v'_z\}$,
and the only regular vertices of $F(E)$ are $\{v_1, v_2,..., v_z\}$. Notice that $|F(E)^0| = h +z$.


Let $A_E = (a_{ij})_{h\times h}$ be the incidence matrix of $E$.
Then the incidence matrix of $F(E)$ is the $(h+z)\times (h+z)$-matrix:
$$A_{F(E)}=\left(\begin{tabular}{ccccccc}
$a_{11}$&$a_{12}$&...&$a_{1h}$&$a_{11}$&...&$a_{1z}$\\
$a_{21}$&$a_{22}$&...&$a_{2h}$&$a_{21}$&...&$a_{2z}$\\
.&.&.&.&.&.&.\\
$a_{z1}$&$a_{z2}$&...&$a_{zh}$&$a_{z1}$&...&$a_{zz}$\\
0&0&...&0&0&...&0\\
.&.&.&.&.&.&.\\
0&0&...&0&0&...&0\\
\end{tabular}\right),$$ and hence
$$[A^t_{F(E)} - J_{F(E)} \ \ b]=\left(\begin{tabular}{cccccccc}
$a_{11}-1$&$a_{21}$&...&$a_{z1}$&0&...&0&1\\
$a_{12}$&$a_{22}-1$&...&$a_{z2}$&0&...&0&1\\
.&.&.&.&.&.&.\\
$a_{1z}$&$a_{2z}$&...&$a_{zz} -1$&0&...&0&1\\
$a_{1(z+1)}$&$a_{2(z+1)}$&...&$a_{z(z+1)}$&0&...&0&1\\
.&.&.&.&.&.&.\\
$a_{1h}$&$a_{2h}$&...&$a_{zh}$&0&...&0&1\\
$a_{11}$&$a_{21}$&...&$a_{z1}$&0&...&0&1\\
$a_{12}$&$a_{22}$&...&$a_{z2}$&0&...&0&1\\
.&.&.&.&.&.&.\\
$a_{1z}$&$a_{2z}$&...&$a_{zz}$&0&...&0&1\\
\end{tabular}\right).$$
For each $1\leq i\leq z$, we subtract row $i$ from row $h+ i$ in the matrix
$[A^t_{F(E)} - J_{F(E)} \ \ b]$, which yields the equivalent matrix $B$:

$$B=\left(\begin{tabular}{cccccccc}
$a_{11}-1$&$a_{21}$&...&$a_{z1}$&0&...&0&1\\
$a_{12}$&$a_{22}-1$&...&$a_{z2}$&0&...&0&1\\
.&.&.&.&.&.&.\\
$a_{1z}$&$a_{2z}$&...&$a_{zz} -1$&0&...&0&1\\
$a_{1(z+1)}$&$a_{2(z+1)}$&...&$a_{z(z+1)}$&0&...&0&1\\
.&.&.&.&.&.&.\\
$a_{1h}$&$a_{2h}$&...&$a_{zh}$&0&...&0&1\\
1&0&...&0&0&...&0&0\\
0&1&...&0&0&...&0&0\\
.&.&.&.&.&.&.\\
0&0&...&1&0&...&0&0\\
\end{tabular}\right).$$
Next, we write $B$ in the form $B = (b_{ij})_{(h+z)\times (h+z +1)}$. If $b_{ij} \neq 0$
$(1\leq i\leq h, 1\leq j\leq z)$, then we subtract $b_{ij}$ times row $h+j$ from row $i$
in the matrix $B$, which yields the equivalent matrix $C$:

$$C=\left(\begin{tabular}{cccccccc}
$0$&$0$&...&$0$&0&...&0&1\\
$0$&$0$&...&$0$&0&...&0&1\\
.&.&.&.&.&.&.\\
$0$&$0$&...&$0$&0&...&0&1\\
$0$&$0$&...&$0$&0&...&0&1\\
.&.&.&.&.&.&.\\
$0$&$0$&...&$0$&0&...&0&1\\
1&0&...&0&0&...&0&0\\
0&1&...&0&0&...&0&0\\
.&.&.&.&.&.&.\\
0&0&...&1&0&...&0&0\\
\end{tabular}\right).$$
Then we immediately get that \[\mathrm{rank}(A^t_{F(E)} - J_{F(E)}) = z < z+1 =
\mathrm{rank}([A^t_{F(E)} - J_{F(E)} \ \ b]),\] which gives that $C_K(E)$ has
Invariant Basis Number, by Theorem 2.5.
\end{proof}

It is known that the IBN property is not a Morita equivalent property for rings (see, e.g.,
\cite[Exercise 11, page 502]{l:lomar}). As another application of Theorem 2.5, we may
construct such counterexamples where both of the rings are Leavitt path algebras. Before
doing this, we recall the following notion:

\begin{defn}[{\cite[Definition 1.2]{alps:fiitcofpa} and \cite[Notation 2.4]{ar:fpsmolpa}}]
Let $E = (E^0, E^1, r, s)$ be a graph, and let $v\in E^0$ be a source. We form the \emph{source
elimination} graph $E_{\setminus v}$ of $E$ as follows:
$(E_{\setminus v})^0 = E^0\setminus \{v\}$, $(E_{\setminus v})^1 =
E^1\setminus s^{-1}(v)$, $s_{E_{\setminus v}} = s|_{(E_{\setminus
v})^1}$ and $r_{E_{\setminus v}} = r|_{(E_{\setminus v})^1}$. In
other words, $E_{\setminus v}$ denotes the graph gotten from $E$ by
deleting $v$ and all of edges in $E$ emitting from $v$.\hfill$\square$
\end{defn}
Ara and Rangaswamy \cite[Lemma 4.3]{ar:fpsmolpa} have proved that if
$E$ is a finite graph, $v$ is a source which is not a sink, and $K$ is a field, then
$L_R(E)$ is Morita equivalent to $L_K(E_{\setminus v})$. Using this key note and
Theorem 2.5, we have the following:

\begin{cor}
The Invariant Basis Number property is not Morita invariant within the class of algebras arising
as a Leavitt path algebra.
\end{cor}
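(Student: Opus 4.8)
The plan is to produce a single finite graph $E$ together with a distinguished vertex $v$ that is a source but not a sink, such that exactly one of $L_K(E)$ and $L_K(E_{\setminus v})$ has Invariant Basis Number. Since Ara and Rangaswamy's result (\cite[Lemma 4.3]{ar:fpsmolpa}) guarantees that for such a $v$ the algebras $L_K(E)$ and $L_K(E_{\setminus v})$ are Morita equivalent, exhibiting one IBN algebra that is Morita equivalent to a non-IBN one settles the corollary at once.

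First I would translate IBN for both graphs through Theorem 2.5: $L_K(E)$ has IBN precisely when $b$ fails to lie in the $\mathbb{Q}$-column space of $A^t_E - J_E$, equivalently when $(A^t_E - J_E)\mathrm{x} = b$ has no solution. Ordering the vertices of $E$ so that $v$ comes first, I would record the effect of the source on the matrix: being a non-sink in a finite graph, $v$ is regular, so the $(v,v)$-entry of $J_E$ equals $1$; being a source, the $v$-row of $A^t_E$ and the $v$-column of $A_E$ both vanish. This produces the block shape
\[
A^t_E - J_E = \left(\begin{array}{cc} -1 & 0 \\ c & M \end{array}\right),
\qquad M = A^t_{E_{\setminus v}} - J_{E_{\setminus v}},
\]
where $c$ is the nonnegative, nonzero vector recording the edges emitted by $v$. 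Solving out the first coordinate ($x_1 = -1$) reduces the system for $E$ to $M\mathrm{x}' = b' + c$ and that for $E_{\setminus v}$ to $M\mathrm{x}' = b'$, with $b'$ the all-ones column of the appropriate length. Hence $L_K(E_{\setminus v})$ fails IBN iff $b' \in \mathrm{col}(M)$, while $L_K(E)$ fails IBN iff $b' + c \in \mathrm{col}(M)$.

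The key idea is then to arrange $b' \in \mathrm{col}(M)$ but $c \notin \mathrm{col}(M)$, forcing $b' + c \notin \mathrm{col}(M)$; this makes $L_K(E_{\setminus v})$ fail IBN while $L_K(E)$ has IBN. Such a configuration demands a rank-deficient $M$, so I would take $E_{\setminus v}$ to be the two-vertex graph $\{w_1, w_2\}$ in which each $w_i$ carries two loops and a single edge to the other vertex, giving $M = \left(\begin{smallmatrix}1&1\\1&1\end{smallmatrix}\right)$ with $\mathrm{col}(M) = \mathbb{Q}\,(1,1)^t \ni b'$. I would then build $E$ by adjoining a new source $v$ with a single edge $v \to w_1$, so that $c = (1,0)^t \notin \mathrm{col}(M)$; a direct check confirms the system for $E$ is inconsistent, whence $L_K(E)$ has IBN by Theorem 2.5, whereas $L_K(E_{\setminus v})$ does not.

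I expect the genuine difficulty to be the search for the example rather than its verification. One must find a graph whose source-deletion toggles membership in $\mathrm{col}(M)$, and this cannot be achieved with a one-vertex $E_{\setminus v}$: there $\mathrm{col}(M)$ is either $\{0\}$ or all of $\mathbb{Q}$, so $b'$ and $b'+c$ always lie on the same side. Once the two-vertex target is isolated the remaining computations are routine, and since Theorem 2.5 depends only on the integer matrix $A^t_E - J_E$, the outcome is independent of $K$. I would finish by confirming that $v$ is genuinely a source that is not a sink, so that the Morita equivalence of \cite[Lemma 4.3]{ar:fpsmolpa} truly applies.
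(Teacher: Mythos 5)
Your proposal is correct and follows essentially the same route as the paper: both invoke Ara--Rangaswamy's source-elimination Morita equivalence together with Theorem 2.5, and then exhibit an explicit finite graph $E$ with a non-sink source $v$ for which exactly one of $L_K(E)$, $L_K(E_{\setminus v})$ satisfies the rank criterion. The only differences are cosmetic: you use a different (sink-free, two-vertex) example with the IBN/non-IBN roles of the two graphs reversed, and you add a correct general block-matrix explanation of when source elimination toggles the criterion, whereas the paper simply computes the two augmented matrices for its chosen pair of graphs.
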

\begin{proof}
Let $K$ be a field, and let $E$ and $F$ be the graphs, respectively:

$$E=\xymatrix{\bullet^{v_0}\ar[r]&\bullet^{v_1}\ar@(ul,ur) \ar@(dr,dl) \ar[r]& \bullet^{v_2}\ar[r]& \bullet^{v_3}}.$$
and \bigskip
\medskip
$$F=\xymatrix{\bullet^{v_1}\ar@(ul,ur) \ar@(dr,dl) \ar[r]& \bullet^{v_2}\ar[r]& \bullet^{v_3}}.$$
\\ We then clearly get that $F$ is the graph gotten from $E$ by the process of source elimination $v_0$,
and hence, $L_K(E)$ is Morita equivalent to $L_K(F)$,
by Ara and Rangaswamy's result \cite[Lemma 4.3]{ar:fpsmolpa}.
Also, we have that

$$[A^t_E -J_E\ \ b] =\left(
                       \begin{array}{ccccc}
                         -1 & 0 & 0 & 0 & 1 \\
                          1 & 1 & 0 & 0 & 1 \\
                         0 & 1 & -1 & 0 & 1 \\
                         0 & 0 & 1 & 0 & 1 \\
                       \end{array}
                     \right)
 \text{ and } [A^t_F-J_F \ \ b] =\left(
                               \begin{array}{cccc}
                                 1 & 0 & 0 & 1 \\
                                 1 & -1 & 0 & 1 \\
                                 0 & 1 & 0 & 1 \\
                               \end{array}
                             \right).$$
This implies
\[\mathrm{ramk}(A^t_E -J_E) = 3 = \mathrm{rank}([A^t_E -J_E\ \ b])\]
and \[\mathrm{ramk}(A^t_F -J_F) = 2< 3 = \mathrm{rank}([A^t_F -J_F\ \ b]).\] Therefore,
$L_K(F)$ has Invariant Basis Number, but $L_K(E)$ does not have Invariant Basis Number,
by Theorem 2.5.
\end{proof}

\section{Graphical sufficient conditions for Leavitt path algebras having Invariant Basis Number}
In this section, we show some certain classes of finite graphs for which Leavitt path algebras
having Invariant Basis Number by using Theorem 2.5. Before doing this, we establish some
algebraic analogs of Arklint and Ruiz's results, which are given in \cite[Section 3]{ar:cocka}.

\begin{defn}[{\cite[Definition 3.2]{ar:cocka}}]
Let $E$ be a graph, let $v_0\in E^0$ be a vertex, and let $n$ be a
positive integer. Define a graph $E(v_0, n)$ as follows:
\[E(v_0, n)^0 = E^0 \cup \{v_1, v_2, ..., v_n\}\]
\[E(v_0, n)^1 = E^1 \cup \{e_1, e_2, ..., e_n\}\]
where $r_{E(v_0, n)}$ and $s_{E(v_0, n)}$ extends $r_E$ and $s_E$ respectively and
$r_{E(v_0, n)}(e_i) = v_{i-1}$ and $s_{E(v_0, n)}(e_i) = v_{i}$.
\end{defn}

\begin{defn}[{\cite[Definition 3.3]{ar:cocka}}]
Let $E$ be a graph, let $e_0\in E^1$ be an edge, and let $n$ be a
positive integer. Define a graph $E(e_0, n)$ as follows:
\begin{equation*}
\begin{array}{l}
E(e_0, n)^0=E^0 \cup \{v_1, v_2, ..., v_n\}\\
E(e_0, n)^1=E^1\setminus\{e_0\} \cup \{e_1, e_2, ...,
e_{n+1}\}\\
\end{array}
\end{equation*}
where $r_{E(e_0, n)}$ and $s_{E(e_0, n)}$ extends $r_E$ and $s_E$ respectively and
$r_{E(e_0, n)}(e_i) = v_{i-1}$ for $i=2, ..., n+1$ and $s_{E(e_0, n)}(e_i) = v_{i}$ for
$i = 1, ..., n$, and $r_{E(e_0, n)}(e_1) = r_E(e_0)$ and $s_{E(e_0, n)}(e_{n+1}) = s_{E}(e_0)$.
\end{defn}

\begin{exas}
Let $E$ be the graph
$$\xymatrix{\bullet{v_0} \ar@(ul,ur)^{e_0} \ar[r]^e& \bullet^v}$$ Then
$E(v_0,2)$ is the graph
$$\xymatrix{\bullet^{v_2} \ar[r]^{e_2}& \bullet^{v_1} \ar[r]^{e_1} &
\bullet{v_0} \ar@(ul,ur)^{e_0} \ar[r]^e& \bullet^v}$$ and $E(e_0,2)$
is the graph
$$\xymatrix{&\bullet^{v_2} \ar[d]^{e_2}&&\\
& \bullet^{v_1} \ar[r]^{e_1} & \bullet{v_0} \ar[ul]_{e_3} \ar[r]^e&
\bullet^v}$$
\end{exas}

\begin{lem}[{cf. \cite[Proposition 3.5]{ar:cocka}}]
Let $K$ be a field and $E$ a graph, let $e_0\in E^1$ be an edge, and let $n$ be a positive integer.
Define $v_0 = r_{E}(e_0)$. Then $$L_K(E(v_0, n))\cong L_K(E(e_0, n)).$$
\end{lem}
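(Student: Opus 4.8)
The plan is to avoid the $C^*$-algebraic route of \cite{ar:cocka} and instead build an explicit $K$-algebra isomorphism by comparing the two graphs directly. Write $w = s_E(e_0)$ and $v_0 = r_E(e_0)$. Both $E(v_0,n)$ and $E(e_0,n)$ contain all of $E$ except possibly the edge $e_0$, together with the same ``tail'' data: vertices $v_1,\dots,v_n$ and edges $e_1,\dots,e_n$ with $s(e_i)=v_i$ and $r(e_i)=v_{i-1}$. The only discrepancy is that in $E(v_0,n)$ the vertex $v_n$ is a source and the edge $e_0\colon w\to v_0$ survives, whereas in $E(e_0,n)$ the edge $e_0$ is deleted and replaced by the path $\beta := e_{n+1}e_n\cdots e_1\colon w\to v_0$, so that $v_n$ instead receives $e_{n+1}$. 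This picture tells me that the subdividing path $\beta$ should play the role of $e_0$.

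First I would define $\phi\colon L_K(E(v_0,n))\to L_K(E(e_0,n))$ on generators by the identity on every vertex and on every shared edge (and its star), together with $\phi(e_0)=\beta=e_{n+1}e_n\cdots e_1$ and $\phi(e_0^*)=\beta^*=e_1^*\cdots e_n^*e_{n+1}^*$. By the universal property of $L_K(E(v_0,n))$ it suffices to check that these images satisfy the defining relations (1)--(4) of $E(v_0,n)$. Relations (1) and (2) are immediate once one records $s(\beta)=w$ and $r(\beta)=v_0$. The computational heart of the matter is the pair of telescoping identities
\[ \beta^*\beta = v_0 \qquad\text{and}\qquad \beta\beta^* = e_{n+1}e_{n+1}^*, \]
both of which follow by repeatedly applying relation (3) together with the Cuntz--Krieger relations $v_i=e_ie_i^*$ at the tail vertices $v_1,\dots,v_n$. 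The first identity is relation (3) for the pair $(e_0,e_0)$; since $e_{n+1}^*$ kills $\phi(g)$ for every edge $g\neq e_0$ of $E(v_0,n)$, we get $\beta^*\phi(g)=0$ and symmetrically $\phi(g)^*\beta=0$, which yields relation (3) in full.

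The crucial point is relation (4). I would observe that $E(v_0,n)$ and $E(e_0,n)$ emit exactly the same edges at every vertex except at $w$, where the edge $e_0$ of $E(v_0,n)$ is swapped for $e_{n+1}$ in $E(e_0,n)$; in particular the tail vertices $v_i$ emit only $e_i$ in both graphs. Hence every Cuntz--Krieger relation of $E(v_0,n)$ other than the one at $w$ is identical to a relation of $E(e_0,n)$ and so holds. At $w$ the relation of $E(v_0,n)$ reads $w=\sum_{f\in s_E^{-1}(w),\,f\neq e_0}ff^*+e_0e_0^*$, and applying $\phi$ converts the last summand into $\beta\beta^*=e_{n+1}e_{n+1}^*$ by the identity above, giving precisely the $w$-relation of $E(e_0,n)$. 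Thus $\phi$ is a well-defined homomorphism, and the same argument covers the degenerate loop case $w=v_0$ verbatim.

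Finally I would define the candidate inverse $\psi\colon L_K(E(e_0,n))\to L_K(E(v_0,n))$ by the identity on the shared generators and by $\psi(e_{n+1})=e_0e_1^*\cdots e_n^*$, $\psi(e_{n+1}^*)=e_n\cdots e_1 e_0^*$, and check by the symmetric telescoping computations (whose key outputs are $\psi(e_{n+1})^*\psi(e_{n+1})=v_n$ and $\psi(e_{n+1})\psi(e_{n+1})^*=e_0e_0^*$) that $\psi$ respects the relations of $E(e_0,n)$. A short calculation, again collapsing $e_ie_i^*=v_i$ and $e_i^*e_i=v_{i-1}$, shows that $\phi\psi$ and $\psi\phi$ fix every generator---in particular $\psi(\beta)=e_0$ and $\phi(e_0e_1^*\cdots e_n^*)=e_{n+1}$---so $\phi$ is an isomorphism. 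The only real obstacle is bookkeeping: making sure the telescoping cascades collapse correctly and that relation (4) is tested at exactly the single vertex $w$ where the emitted edges differ, everything else being forced by the common tail structure.
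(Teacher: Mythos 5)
Your proposal is correct, and the map you construct is exactly the one the paper uses: $\varphi$ fixes all shared generators and sends $e_0\mapsto e_{n+1}e_n\cdots e_1$. Where you genuinely diverge is in how bijectivity is established. The paper proves surjectivity by exhibiting the preimage $e_0e_1^*\cdots e_n^*$ of $e_{n+1}$ (the same element you use), but then proves injectivity separately by invoking Colak's structure theorem for two-sided ideals of Leavitt path algebras --- a nonzero kernel must contain an element $v+\sum_i k_ic^i$ for some cycle $c$ based at $v$ --- followed by a case analysis on whether $c$ has an exit, using the isomorphism $vL_K(E(e_0,n))v\cong K[x,x^{-1}]$ in the exit-free case. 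You instead promote the preimage formula to a full inverse homomorphism $\psi$ with $\psi(e_{n+1})=e_0e_1^*\cdots e_n^*$, verify that $\psi$ respects the relations of $E(e_0,n)$ (the telescoping identities $\psi(e_{n+1})^*\psi(e_{n+1})=v_n$ and $\psi(e_{n+1})\psi(e_{n+1})^*=e_0e_0^*$ being the only nontrivial checks), and conclude by observing that $\phi\psi$ and $\psi\phi$ fix all generators. Your route is more elementary and self-contained: it avoids both the ideal-theoretic machinery and the appeal to the Arklint--Ruiz $C^*$-computations for well-definedness, at the cost of one extra universal-property verification; the paper's route, by contrast, only has to check that one map is a homomorphism, but pays for it with the kernel analysis. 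Your identification of $w=s_E(e_0)$ as the unique vertex where relation (4) differs between the two graphs, and your remark that the loop case $w=v_0$ requires no change, are both accurate.
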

\begin{proof} Let us consider an $K$-algebra homomorphism $$\varphi: L_K(E(v_0, n))\longrightarrow
L_K(E(e_0, n))$$ given on the generators of the free $K$-algebra $K\langle v, e, e^* \mid v\in E(e_0, n)^0,
e\in E(e_0, n)^1\rangle$ as follows: $\varphi(v) = v$

\begin{equation*}
\varphi(e)=  \left\{
\begin{array}{lcl}
e&  & \text{if } e\neq e_0 , \\
e_{n+1}e_n...e_1&  & \text{otherwise \ \ }%
\end{array}%
\right.
\end{equation*}%
and \begin{equation*}
\varphi(e^*)=  \left\{
\begin{array}{lcl}
e^*&  & \text{if } e\neq e_0 , \\
e^*_1...e^*_n e^*_{n+1}&  & \text{otherwise. \ \ }%
\end{array}%
\right.
\end{equation*}%
To be sure that in a such manner defined map $\varphi: L_K(E(v_0, n))\longrightarrow
L_K(E(e_0, n))$, indeed, provides us with the desired ring homomorphism, we only need to verify that all
following elements:

$vw - \delta_{v, w}v$ for all $v, w\in E(v_0,n)^0$,

$s_{E(v_0, n)}(e)e - e$ and $e-er_{E(v_0, n)}(e)$ for all $e\in E(v_0,n)^1$,

$r_{E(v_0, n)}(e)e^*-e^*$ and $e^*-e^*s_{E(v_0, n)}(e)$ for all $e\in E(v_0,n)^1$,

$e^*f - \delta_{e, f}r_{E(v_0, n)}(e)$ for all $e, f\in E(v_0,n)^1$,

$v - \sum_{e\in (s_{E(v_0, n)})^{-1}(v)}ee^*$ for a regular vertex $v\in E(v_0,n)^0$\\
are in the kernel of $\varphi$. But the latter can be established right away by repeating verbatim
the corresponding obvious arguments in the proof of \cite[Proposition 3.5]{ar:cocka}. Note that
the only generator of $L_K(E(e_0, n))$ that is not included in the generators of $L_K(E(v_0, n))$
is $e_{n+1}$. In this case, we note that we always have $v_i = e_ie^*_i$ for all $i = 1,..., n$, and
hence, \[\varphi(e_0e^*_1...e^*_n) = e_{n+1}e_n...e_1e^*_1...e^*_n = e_{n+1}.\] Therefore,
$e_{n+1} \in \varphi(L_K(E(v_0, n)))$, which implies that $\varphi$ is surjective.

We next prove that $\varphi$ is injective: Indeed, suppose $\varphi$ is not injective, that means,
we then have that $\ker(\varphi) \neq 0$. By \cite[Theorem 6]{col:tsiilpa}, $\ker(\varphi)$ contains
a nonzero element $\alpha$ of the form: \[\alpha = v + \sum_{i=1}^nk_ic^i,\] where $v\in E(v_0, n)^0$,
$c$ is a cycle in $E(v_0,n)$ based at $v$, and $k_i\in K$ for $1\leq i\leq n$. We consider the following two cases:

\emph{Case} 1. The cycle $c$ has an exit $f\in E(v_0, n)^1$, say $c:= f_1 ... f_m$. Then, there exists
$1\leq j\leq m$ such that $f\neq f_j$ and $s(f) = s(f_j)$, and hence,
$$z^*\alpha z = z^*vz + \sum_{i=1}^nk_iz^*c^iz = r(z)\in \ker(\varphi)$$ for $z:= f_1 ... f_{j-1}f$.
This implies that $r(z) = \varphi(r(z)) = 0$, a contradiction.

\emph{Case} 2. The cycle $c$ has no an exit. Note first that the cycle structure of $E(v_0,n)$ is determined by
the cycle structure of $E$ and vice versa. Moreover, the cycles of $E(v_0,n)$ without exits are
in one-to-one correspondence to the cycles of $E(e_0, n)$ without exits. By this note, we have that $p:= \varphi(c)$
is a cycle in $E(e_0, n)$ without exits based at $v$, and hence, $vL_K(E(e_0,n))v = K[p, p^*]$ is isomorphic
to the Laurent polynomial ring $K[x, x^{-1}]$, via an isomorphism that sends $v$ to $1$, $p$ to $x$
and $p^*$ to $x^{-1}$, by \cite[Lemma 2.2.1]{aam:lpa}. This implies that
$$0= \varphi(\alpha) = v + \sum_{i=1}^nk_ip^i \neq 0,$$ a contradiction.

From the two paragraphs above, we get immediately that $\varphi$ is injective. Therefore, $\varphi$ is
an isomorphism, finishing the proof.
\end{proof}

\begin{defn}[{\cite[Definition 3.6]{ar:cocka}}]
Let $E$ be a graph and let $H$ be a hereditary subset of $E^0$. Consider the set
\[F(H) =\{\alpha \mid \alpha = e_1e_2...e_n, s_E(e_n)\notin H, r_E(e_n)\in H\}.\] Let
$\overline{F}(H)$ be another copy of $F(H)$ and we write $\overline{\alpha}$ for the
copy of $\alpha$ in $\overline{F}(H)$. Define a graph $E(H)$ as follows:
\begin{equation*}
\begin{array}{l}
E(H)^0 = H\cup F(H)\\
E(H)^1 = s^{-1}_E(H)\cup \overline{F}(H)\\
\end{array}
\end{equation*} and extend
$s_E$ and $r_E$ to $E(H)$ by defining $s_{E(H)}(\overline{\alpha}) = \alpha$ and
$r_{E(H)}(\overline{\alpha}) = r(\alpha)$.
\end{defn}

Notice that $E(H)$ is just the graph $(H, s^{-1}_E(H), s_E, r_E)$ together with a
source for each $\alpha\in F(H)$ with exactly one edge from $\alpha$ to $r_E(\alpha)$.

\begin{exas}
Let $E$ be the graph
$$\xymatrix{&\bullet^{v_3} \ar[d]^{e_3}&&\\
\bullet^{v_2} \ar[r]^{e_2}& \bullet^{v_1} \ar[r]^{e_1} &
\bullet{v_0} \ar@(ul,ur)^{e_0} \ar[r]^e& \bullet^v}$$ and
$H=\{v_0, v\}$. Then $F(H)=\{e_1,e_2e_1,e_3e_1\}$. Therefore, the graph
$$\xymatrix{\bullet^{e_3e_1} \ar[dr]^{\overline{e_3e_1}}&& \\
\bullet^{e_1} \ar[r]^{\overline{e_1}}& \bullet^{v_0} \ar@(ul,ur)^{e_0} \ar[r]^e & \bullet^v\\
\bullet^{e_2e_1} \ar[ur]_{\overline{e_2e_1}}&& }$$ represents the graph $E(H)$.
\end{exas}

\begin{lem}[{cf. \cite[Theorem 3.8]{ar:cocka}}] Let $K$ be a field and
$E$ a graph, and let $H$ be a hereditary subset of $E^0$. Suppose
\[(E^0\setminus H, r^{-1}_E(E^0\setminus H), s_E, r_E)\] is a finite acyclic graph and
$v\geq H$ for all $v\in E^0\setminus H$. Assume furthermore that the set
$s^{-1}(E^0\setminus H) \cap r^{-1}(H)$ is finite. Then $L_K(E)\cong L_K(E(H))$.
\end{lem}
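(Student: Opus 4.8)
The plan is to produce an explicit isomorphism $\Psi\colon L_K(E(H))\to L_K(E)$ together with an explicit inverse $\Phi$, the whole argument being powered by a single structural identity in $L_K(E)$ that records how the acyclic exterior $E^0\setminus H$ collapses onto its ``first-return'' paths. First I would extract the consequences of the hypotheses. Since $H$ is hereditary, any edge whose range lies in $E^0\setminus H$ must have its source there as well, so the exterior is a full subgraph of $E$; since moreover $v\ge H$ for every $v\in E^0\setminus H$ and $E$ is row-finite, no exterior vertex is a sink, and hence \emph{every} $v\in E^0\setminus H$ is regular. For such a $v$ write $F_v(H)=\{\alpha\in F(H)\mid s_E(\alpha)=v\}$ for the set of paths leaving $v$ that meet $H$ for the first time exactly at their terminal vertex; finiteness and acyclicity of the exterior guarantee that each $F_v(H)$ is finite and that the recursion below terminates. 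Repeatedly applying the defining relation $v=\sum_{e\in s^{-1}(v)}ee^{*}$ at the (regular) exterior vertices, one arrives at the \emph{key identity}
\[ v=\sum_{\alpha\in F_v(H)}\alpha\alpha^{*}\qquad(v\in E^0\setminus H). \]
One also checks that no element of $F(H)$ is a proper prefix of another (once a path enters $H$ it stays in $H$), so that $\alpha^{*}\beta=\delta_{\alpha,\beta}\,r(\alpha)$ for all $\alpha,\beta\in F(H)$.

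Next I would define the two maps on generators. For $\Psi\colon L_K(E(H))\to L_K(E)$ set $\Psi(v)=v$ for $v\in H$, $\Psi(\alpha)=\alpha\alpha^{*}$ for a source vertex $\alpha\in F(H)$, $\Psi(e)=e$ and $\Psi(e^{*})=e^{*}$ for $e\in s_E^{-1}(H)$, and $\Psi(\overline{\alpha})=\alpha$, $\Psi(\overline{\alpha}^{*})=\alpha^{*}$ for $\overline{\alpha}\in\overline{F}(H)$. For $\Phi\colon L_K(E)\to L_K(E(H))$ set $\Phi(v)=v$ for $v\in H$, $\Phi(v)=\sum_{\alpha\in F_v(H)}\alpha$ (a sum of orthogonal source vertices of $E(H)$) for $v\in E^0\setminus H$, $\Phi(e)=e$, $\Phi(e^{*})=e^{*}$ for $e\in s_E^{-1}(H)$, and for $e\in E^1$ with $s_E(e)\notin H$,
\[ \Phi(e)=\overline{e}\ \ \text{if } r_E(e)\in H,\qquad \Phi(e)=\sum_{\nu\in F_{r(e)}(H)}\overline{e\nu}\,\overline{\nu}^{*}\ \ \text{if } r_E(e)\notin H, \]
with $\Phi(e^{*})$ the $*$-transpose of $\Phi(e)$.

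Then I would verify that each of $\Psi,\Phi$ respects the defining relations (1)--(4) of the target Leavitt path algebra, so that both extend to $K$-algebra homomorphisms. For $\Psi$ the orthogonality of the vertex images follows from the prefix-incomparability above, relation (3) for the $\overline{\alpha}$ is exactly $\alpha^{*}\beta=\delta_{\alpha,\beta}r(\alpha)$, and relation (4) at the source vertex $\alpha$ reads $\alpha\alpha^{*}=\Psi(\overline{\alpha}\,\overline{\alpha}^{*})$, which holds by the very definition of $\Psi(\alpha)$; the mixed (3)-relations vanish because an edge of $s_E^{-1}(H)$ can never coincide with the first edge of an $\alpha\in F(H)$. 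The genuinely substantive point, and the step I expect to be the main obstacle, is relation (4) for $\Phi$ at a \emph{regular exterior vertex} $v$: I must show $\sum_{\alpha\in F_v(H)}\alpha=\sum_{e\in s^{-1}(v)}\Phi(e)\Phi(e)^{*}$. Expanding the right-hand side, each $\Phi(e)\Phi(e)^{*}$ collapses by relation (3) in $E(H)$ ($\overline{\nu}^{*}\overline{\nu'}=\delta_{\nu,\nu'}r(\nu)$) and then by relation (4) at the source vertices ($\overline{e\nu}\,\overline{e\nu}^{*}=e\nu$) to $\sum_{\nu\in F_{r(e)}(H)}(e\nu)$; summing over $e\in s^{-1}(v)$ and using the unique decomposition $\alpha=e\nu$ of each $\alpha\in F_v(H)$ into its first edge and a first-return path from $r(e)$ recovers $\sum_{\alpha\in F_v(H)}\alpha$. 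This is precisely the combinatorial content of the key identity, and it is exactly here that regularity of exterior vertices, acyclicity, and finiteness are all used; the remaining (3)-relations for $\Phi$ are a routine case analysis.

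Finally I would check that $\Psi$ and $\Phi$ are mutually inverse on generators, whence each is an isomorphism. On $H$ this is immediate; on a source vertex $\alpha\in F(H)$ one computes $\Phi(\Psi(\alpha))=\Phi(\alpha\alpha^{*})=\overline{\alpha}\,\overline{\alpha}^{*}=\alpha$ and $\Phi(\Psi(\overline{\alpha}))=\Phi(\alpha)=\overline{\alpha}$ (the latter by telescoping the product $\Phi(e_1)\cdots\Phi(e_n)$ using relation (3) in $E(H)$); on an exterior vertex $v$ one has $\Psi(\Phi(v))=\sum_{\alpha\in F_v(H)}\alpha\alpha^{*}=v$ and, for an exterior edge $e$, $\Psi(\Phi(e))=\sum_{\nu}(e\nu)\nu^{*}=e\sum_{\nu}\nu\nu^{*}=e\,r(e)=e$, both by the key identity. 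This completes the isomorphism. As a robustness remark, injectivity of $\Psi$ can alternatively be obtained from a graded-uniqueness argument: assigning to the edge $\overline{\alpha}$ the integer weight $|\alpha|$ (and weight $1$ to edges of $s_E^{-1}(H)$) turns $\Psi$ into a graded homomorphism onto the standard $\mathbb{Z}$-grading of $L_K(E)$ that is nonzero on every vertex, so that $\Psi$ is injective; combined with the surjectivity furnished by the key identity this reproves the result, but the explicit inverse $\Phi$ is the more transparent route.
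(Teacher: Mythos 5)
Your proposal is correct, and the forward map $\Psi$ is exactly the homomorphism $\phi$ that the paper defines on generators ($v\mapsto v$ on $H$, $\alpha\mapsto\alpha\alpha^*$ on $F(H)$, $\overline{\alpha}\mapsto\alpha$). Where you genuinely diverge is in how bijectivity is established. The paper defers well-definedness and surjectivity to the corresponding arguments of Arklint--Ruiz, and proves injectivity via the structure theory of two-sided ideals in Leavitt path algebras: by Colak's theorem a nonzero kernel would contain an element $v+\sum_i k_i c^i$ for some cycle $c$ based at $v$, and a case analysis (cycle with an exit, versus cycle without an exit mapping to a corner isomorphic to $K[x,x^{-1}]$) yields a contradiction. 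You instead build an explicit two-sided inverse $\Phi$, powered by the identity $v=\sum_{\alpha\in F_v(H)}\alpha\alpha^*$ for exterior vertices $v$, which you correctly derive by iterating the relation (4) through the finite acyclic exterior (every exterior vertex is regular since $v\geq H$, and the prefix-incomparability of $F(H)$ gives $\alpha^*\beta=\delta_{\alpha,\beta}r(\alpha)$). This settles injectivity and surjectivity simultaneously and is more self-contained --- it needs neither Colak's ideal theorem nor the Laurent-polynomial corner lemma, and it makes transparent exactly where acyclicity, finiteness, and regularity of the exterior enter (namely in the termination of the recursion and in verifying relation (4) for $\Phi$ at exterior vertices). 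The price is the longer relation-checking for $\Phi$, which you outline accurately; your closing remark that injectivity of $\Psi$ alternatively follows from graded uniqueness (regrading $\overline{\alpha}$ with weight $|\alpha|$) is also a valid third route. All three arguments are sound; yours and the graded one avoid the external inputs the paper relies on.
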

\begin{proof}
Let us consider an $K$-algebra homomorphism $$\phi: L_K(E(H))\longrightarrow
L_K(E)$$ given on the generators of the free $K$-algebra $K\langle v, e, e^* \mid v\in E(H)^0,
e\in E(H)^1\rangle$ as follows: For $v\in E(H)^0$ define
\begin{equation*}
\phi(v)=  \left\{
\begin{array}{lcl}
v&  & \text{if } v\in H , \\
\alpha\alpha^*&  & \text{if } v = \alpha \in F(H)%
\end{array}%
\right.
\end{equation*}%
and for $e\in E(H)^1$ define
\begin{equation*}
\phi(e)=  \left\{
\begin{array}{lcl}
e&  & \text{if } e\in s^{-1}_E(H), \\
\alpha&  & \text{if } e = \overline{\alpha} \in \overline{F}(H)%
\end{array}%
\right.
\end{equation*}%
and \begin{equation*}
\phi(e^*)=  \left\{
\begin{array}{lcl}
e^*&  & \text{if } e\in s^{-1}_E(H), \\
\alpha^*&  & \text{if } e = \overline{\alpha} \in \overline{F}(H)%
\end{array}%
\right.
\end{equation*}%
To be sure that in a such manner defined map $\phi: L_K(E(H))\longrightarrow L_K(E)$, indeed, provides
us with the desired ring homomorphism, we only need to verify that all following elements:

$vw - \delta_{v, w}v$ for all $v, w\in E(H)^0$,

$s_{E(H)}(e)e - e$ and $e-er_{E(H)}(e)$ for all $e\in E(H)^1$,

$r_{E(H)}(e)e^*-e^*$ and $e^*-e^*s_{E(H)}(e)$ for all $e\in E(H)^1$,

$e^*f - \delta_{e, f}r_{E(H)}(e)$ for all $e, f\in E(H)^1$,

$v - \sum_{e\in (s_{E(H)})^{-1}(v)}ee^*$ for a regular vertex $v\in E(H)^0$\\
are in the kernel of $\phi$. But the latter can be established right away by repeating verbatim
the corresponding obvious arguments in the proof of \cite[Theorem 3.8]{ar:cocka}.

Similar to the proof of Lemma 3.4 for injectivity of $\varphi$ and use the note that if $c$ is a cycle in $E(H)$
without exits, then since  the cycles in $E(H)$ come from cycles in $E$ all lying in the subgraph
given by $(H, s^{-1}_E(H), s_E, r_E)$, we must have $\phi(c) = c$ is a cycle in $E$ without exits, we get
immediately that $\phi$ is injective. Also, $\phi$ is surjective, by repeating verbatim the corresponding argument in
the proof of \cite[Theorem 3.8]{ar:cocka}. Therefore, $\phi$ is an isomorphism, finishing the proof.
\end{proof}

\begin{defn}[{\cite[Definition 3.9]{ar:cocka}}]
Let $E$ be a graph, let $v_0\in E^0$ be a vertex, and let $n$ be a
positive integer. Define a graph $E'(v_0, n)$ as follows:
\[E'(v_0, n)^0 = E^0 \cup \{v_1, v_2, ..., v_n\}\]
\[E'(v_0, n)^1 = E^1 \cup \{e_1, e_2, ..., e_n\}\]
where $r_{E'(v_0, n)}$ and $s_{E'(v_0, n)}$ extends $r_E$ and $s_E$ respectively and
$r_{E'(v_0, n)}(e_i) = v_{0}$ and $s_{E'(v_0, n)}(e_i) = v_{i}$ for all $i = 1, ..., n$.
\end{defn}

\begin{cor}[{cf. \cite[Corollary 3.10]{ar:cocka}}]
Let $K$ be a field and $E$ a graph, let $v_0\in E^0$ be a vertex, and let $n$ be a
positive integer. Then $L_K(E(v_0,n))\cong L_K(E'(v_0,n))$.
\end{cor}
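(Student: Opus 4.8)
The plan is to realize both sides as instances of the hereditary‑collapse construction $E(H)$ of Definition 3.6 and then to invoke Lemma 3.7. Concretely, I would apply Lemma 3.7 to the graph $E(v_0,n)$ with the hereditary set $H=E^0$, and then identify the resulting graph $E(v_0,n)(H)$ with $E'(v_0,n)$. Since a graph isomorphism induces an isomorphism of the associated Leavitt path algebras, this chain yields
\[
L_K(E(v_0,n)) \cong L_K(E(v_0,n)(H)) \cong L_K(E'(v_0,n)).
\]

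First I would verify that $H=E^0$ is hereditary in $E(v_0,n)$ and that the three hypotheses of Lemma 3.7 hold. By Definition 3.1 the new vertices sit on a tail $v_n\xrightarrow{e_n}v_{n-1}\to\cdots\to v_1\xrightarrow{e_1}v_0$, each $v_i$ receiving an edge only from $v_{i+1}$, and no vertex of $E^0$ emits an edge into $\{v_1,\dots,v_n\}$; hence from a vertex of $E^0$ one can only reach $E^0$, so $H$ is hereditary. The subgraph on $E(v_0,n)^0\setminus H=\{v_1,\dots,v_n\}$ cut out by the edges with range in that set is exactly the finite acyclic tail $v_n\to\cdots\to v_1$ (no edges of $E$ land in $\{v_1,\dots,v_n\}$, so this holds even when $E$ is infinite); every $v_i$ satisfies $v_i\ge v_0\in H$; and $s^{-1}(E^0\setminus H)\cap r^{-1}(H)=\{e_1\}$ is finite. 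Thus Lemma 3.7 applies and gives $L_K(E(v_0,n))\cong L_K(E(v_0,n)(H))$.

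The crux is then to identify $E(v_0,n)(H)$. I would compute $F(H)$: because $H$ is hereditary, a path lies in $F(H)$ precisely when it begins outside $H$ and enters $H$ only on its terminal edge, and in a simple directed tail the unique such path reaching $v_0$ from $v_i$ is $q_i:=e_ie_{i-1}\cdots e_1$; hereditariness rules out any re‑entry into the tail after $v_0$. Hence $F(H)=\{q_1,\dots,q_n\}$ while $s_{E(v_0,n)}^{-1}(H)=E^1$, so by Definition 3.6 the graph $E(v_0,n)(H)$ has vertices $E^0\sqcup\{q_1,\dots,q_n\}$ and edges $E^1\sqcup\{\overline{q_1},\dots,\overline{q_n}\}$, with $E$ left unchanged and each $\overline{q_i}$ a single edge from the source $q_i$ to $r(q_i)=v_0$. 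Relabelling $q_i\mapsto v_i$ and $\overline{q_i}\mapsto e_i$ exhibits a graph isomorphism $E(v_0,n)(H)\cong E'(v_0,n)$ (Definition 3.9), which finishes the proof.

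I expect the main obstacle to be the bookkeeping in this last identification: one must argue that $F(H)$ consists of exactly the $n$ tail‑paths $q_i$, neither more nor fewer, which rests on the tail being a simple line (unique path $v_i\to v_0$) together with hereditariness. Note also that Lemma 3.4 cannot replace this argument in general, since it requires $v_0$ to be the range of an edge of $E$, whereas here $v_0$ is arbitrary. Should a fully self‑contained argument be preferred, an alternative is to write down the explicit $K$‑algebra map $\psi\colon L_K(E'(v_0,n))\to L_K(E(v_0,n))$ fixing $E$, with $\psi(v_i)=v_i$ and $\psi(e_i)=e_ie_{i-1}\cdots e_1$, check the defining relations, prove surjectivity via $e_i=\psi(e_i)\psi(e_{i-1})^*$, and prove injectivity by the cycle analysis used in the proof of Lemma 3.4; but the collapse‑to‑$E^0$ route above is shorter and reuses machinery already established.
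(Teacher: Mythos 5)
Your proposal is correct and follows exactly the paper's own route: apply Lemma 3.7 to $E(v_0,n)$ with the hereditary set $H=E^0$ and identify $E(v_0,n)(E^0)$ with $E'(v_0,n)$. You merely supply the details (verification of the hypotheses of Lemma 3.7 and the computation $F(H)=\{e_ie_{i-1}\cdots e_1 \mid 1\le i\le n\}$) that the paper leaves as ``not hard to see.''
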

\begin{proof} It is not hard to see that $E^0$ is a hereditary subset of $E(v_0,n)^0$, and $E(v_0,n)(E^0)$
is isomorphic to the graph $E'(v_0,n)$. Therefore, by Lemma 3.7, we immediately get the statement.
\end{proof}

Let $E$ be a finite graph. If $E$ is acyclic, then repeated application of the source
elimination process to $E$ yields the empty graph. On the other hand, if $E$ contains a cycle,
then repeated application of the source elimination process will yield a source-free graph
$E_{sf}$ which necessarily contains a cycle.

Consider the sequence of graphs which arises in some step-by-step process of source
eliminations
\[E= E_0\rightarrow E_1\rightarrow\cdots\rightarrow E_i\rightarrow\cdots
\rightarrow E_t = E_{sf}.\] To avoid defining a graph to be the empty set, we define
$E_{sf}$ to be the graph $E_{triv}$ (consisting of one vertex and no edges) in case
$E_{t-1} = E_{triv}$.

Although there in general are many different orders in which a step-by-step source
elimination process can be carried out, the resulting source-free subgraph $E_{sf}$ is always the same
(see, e.g., \cite[Lemma 3.13]{anp:lpahugn}).

\begin{thm}
Let $E$ be a finite graph and $K$ a field. Let \[E= E_0\rightarrow E_1\rightarrow\cdots\rightarrow E_i\rightarrow\cdots
\rightarrow E_t = E_{sf}\] be a sequence of graphs which arises in some step-by-step process of source
eliminations. Then the following statements are true:

(1) If some $E_i\ (0\leq i\leq t)$ contains an isolated vertex, then $L_K(E)$ has Invariant Basis Number;

(2) If no $E_i\ (0\leq i\leq t)$ contains an isolated vertex, then there exists a finite source-free graph $F$ satisfying the
following conditions:
\begin{itemize}
\item[(i)] $L_K(E)\cong L_K(F);$
\item[(ii)] The cycles of $F$ without exits are in one-to-one correspondence to the cycles of $E$ without exits;
\item[(iii)] The source cycles of $F$ are in one-to-one correspondence to the source cycles of $E_{sf}$.
\end{itemize}
\end{thm}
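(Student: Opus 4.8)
The plan is to observe first that the stated dichotomy is really a dichotomy about the terminal graph $E_{sf}$. Since the source elimination process only deletes sources that are not sinks, the only vertices of $E_{sf}$ that can be sources are its isolated vertices; moreover an isolated vertex, once it appears in some $E_i$, is never deleted and acquires no new edges at later stages, so it persists to $E_t=E_{sf}$. Hence ``some $E_i$ contains an isolated vertex'' is equivalent to ``$E_{sf}$ contains an isolated vertex'', and case (2) is exactly the case in which $E_{sf}$ is source-free. Throughout I put $H:=E_{sf}^0$. A short argument shows $H$ is hereditary in $E$: an edge emitted by a vertex surviving to $E_{sf}$ is never deleted, so no edge leaves $H$, and thus any vertex reachable from a vertex of $H$ again lies in $H$. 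Dually, every vertex of $E^0\setminus H$ reaches $H$ (following outgoing edges one eventually meets either a cycle, which lies in $H$ since cycles are never deleted, or a sink of $E$, which also lies in $H$ since a sink is deleted only after becoming isolated), and $(E^0\setminus H, r_E^{-1}(E^0\setminus H), s_E, r_E)$ is finite and acyclic.

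For part (1), I would use Theorem 2.5 in transposed form. Writing $M:=A^t_E-J_E$, one has $\mathrm{rank}(M)<\mathrm{rank}([M\ b])$ precisely when $b$ fails to lie in the column space of $M$, equivalently when there exists $y\in\mathbb{Q}^h$ with $M^t y=(A_E-J_E)y=0$ and $\sum_{i}y_i\neq 0$. Now let $w\in H$ be an isolated vertex of $E_{sf}$; it is a sink of $E$, and by the hereditary property no cycle of $E$ can reach $w$ (all in-edges of $w$ emanate from the acyclic set $E^0\setminus H$), so only finitely many paths of $E$ end at $w$. Define $y_v$ to be the number of paths in $E$ from $v$ to $w$, with $y_w=1$ for the trivial path. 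For a regular vertex $v_i$ the corresponding row of $(A_E-J_E)y=0$ reads $y_i=\sum_{e\in s^{-1}(v_i)}y_{r(e)}$, and this holds because every positive-length path from $v_i$ to $w$ factors uniquely through its first edge, while the sink $w$ imposes no equation. Since $\sum_i y_i\ge y_w=1\neq 0$, the rank inequality holds and $L_K(E)$ has Invariant Basis Number.

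For part (2), with $E_{sf}$ source-free, the hypotheses of Lemma 3.7 hold for $H=E_{sf}^0$ by the remarks above (the remaining finiteness of $s^{-1}(E^0\setminus H)\cap r^{-1}(H)$ being automatic as $E$ is finite), whence $L_K(E)\cong L_K(E(H))$. Because IBN is not a Morita invariant we genuinely need this \emph{isomorphism}, not merely the Morita equivalence $L_K(E)\sim L_K(E_{sf})$ coming from source elimination. By the description following Lemma 3.7, $E(H)$ is the source-free graph $E_{sf}$ together with one extra source for each $\alpha\in F(H)$, each such source emitting a single edge into $r(\alpha)\in E_{sf}^0$. I then remove these extra sources vertex by vertex: fixing $v_0\in E_{sf}^0$ and letting $n$ be the number of added sources pointing at $v_0$, Corollary 3.9 replaces them, up to $L_K$-isomorphism, by a single tail of length $n$ ending at $v_0$; and since $E_{sf}$ is source-free, $v_0$ carries an in-edge $e_0$ of the core, so Lemma 3.4 (with $v_0=r_E(e_0)$) absorbs that tail into $e_0$, subdividing $e_0$ and introducing no new source. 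Carrying this out for every $v_0$ yields a finite source-free graph $F$ with $L_K(E)\cong L_K(F)$, giving (i).

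Finally I would read off (ii) and (iii) by tracking cycles through the moves. None of the three moves creates or destroys an exit of a cycle: the passage to $E(H)$ leaves the out-edges of the core vertices (hence of every cycle, as all cycles of $E$ already lie in $E_{sf}$) unchanged, while replacing sources by a tail and subdividing an in-edge only lengthen a cycle through new vertices of in- and out-degree one. Thus cycles without exits correspond bijectively, which is (ii). For (iii), the construction leaves the in-degree $|r^{-1}(v)|$ of every core vertex $v$ unchanged, since absorbing the resulting tail into one existing in-edge $e_0$ replaces that single in-edge by a single new one, whereas all subdivision vertices have in-degree one; therefore a source cycle of $E_{sf}$ remains a source cycle of $F$ (merely lengthened if one of its edges was subdivided) and conversely. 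The main obstacle I anticipate is the bookkeeping in part (2): checking the hypotheses of Lemma 3.7 for this particular $H$ and, above all, verifying that the source-cycle correspondence (iii) survives the subdivisions---which forces the choice of $e_0$ at a source-cycle vertex to be its unique cycle edge and requires care that no subdivision creates or destroys a source cycle.
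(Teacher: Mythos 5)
Your part (2) follows the paper's route almost exactly --- Lemma 3.7 applied to $H=E_{sf}^0$, then Corollary 3.9 to consolidate the added sources at each core vertex into a single tail and Lemma 3.4 to absorb that tail into an existing in-edge --- and your tracking of in- and out-degrees for (ii) and (iii) is a more careful version of what the paper merely asserts. Your part (1), however, is a genuinely different argument. The paper also passes to $E(E_j^0)$ via Lemma 3.7, but then observes that the isolated vertex together with the added sources pointing at it forms a disjoint component which, by Corollary 3.9, has Leavitt path algebra $M_{n+1}(K)$; so $L_K(E)$ surjects onto a ring with IBN and inherits IBN from that quotient. You instead verify the rank criterion of Theorem 2.5 directly, exhibiting a vector in the left kernel of $A_E^t-J_E$ with nonzero coordinate sum, namely the vector counting paths into the isolated sink. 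That computation is correct (the finiteness of the path counts follows from heredity of $E_j^0$ and acyclicity of its complement, exactly the facts the paper also needs for Lemma 3.7), and it has the virtue of staying entirely inside the matrix criterion rather than invoking the behaviour of IBN under surjections.

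The one genuine flaw is your opening reduction. You assume the source-elimination process deletes only sources that are not sinks, and conclude that an isolated vertex, once it appears, persists to $E_{sf}$. Under the paper's convention this is false: the process deletes arbitrary sources (this is forced by the assertion that an acyclic graph reduces to the empty graph), so an isolated vertex appearing in some $E_i$ is itself eliminated at a later step, and $E_{sf}$, being source-free, contains no isolated vertex at all outside the degenerate $E_{triv}$ case. For instance, if $E$ is the disjoint union of a loop and an isolated vertex, then $E_0$ contains an isolated vertex while $E_{sf}$ is just the loop. The repair is exactly what the paper does: take $j$ minimal such that $E_j$ contains an isolated vertex $w$, and run your argument with $H=E_j^0$ in place of $E_{sf}^0$. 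All the properties you use ($H$ hereditary, $E^0\setminus H$ acyclic, every in-edge of $w$ sourced in $E^0\setminus H$, hence $w$ a sink of $E$ reached by only finitely many paths) hold in that setting, so your null-vector construction goes through unchanged.
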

\begin{proof}
(1) Assume first that $E_i$ contains an isolated vertex for some $i$. Let $j$ denote
the minimal such $i$. Then, at each step of the source elimination process
\[E= E_0\rightarrow E_1\rightarrow\cdots\rightarrow E_j\] the source which is being eliminated
is not an isolated vertex.

It is not hard to check that $E^0_j$ is a hereditary subset of
$E^0$, the finite graph $(E^0\setminus E^0_j, r^{-1}_E(E^0\setminus E^0_j), s_E, r_E)$ is acyclic and
$v\geq E^0_j$ for all $v\in E^0\setminus E^0_j$. Therefore, by Lemma 3.7, $L_K(E)\cong L_K(E(E^0_j))$.
As was mentioned earlier, $E(E^0_j)$ is just the graph $(E^0_j, s^{-1}_E(E^0_j), s_E, r_E)$ together with a
source for each $\alpha\in F(E^0_j)$ with exactly one edge from $\alpha$ to $r_E(\alpha)$.
Let $v$ be the isolated vertex in $E_j$ and $n$ the number of paths in $E$ ending in $v$. Consider
the subgraph $H = (H^0, H^1)$ of $E(E^0_j)$ as follows: \[H^0 := \{v, s_{E(E^0_j)}(f)\mid f\in
r^{-1}_{E(E^0_j)}(v)\} \text{ and } H^1 := r^{-1}_{E(E^0_j)}(v).\] We then have that $E(E^0_j)
= H \sqcup E(E^0_j)\setminus H$, and hence, $$L_K(E(E^0_j))\cong L_K(H)\oplus L_K(E(E^0_j)\setminus H).$$
It shows that there is a natural surjection from $L_K(E(E^0_j))$ onto $L_K(H)$.

On the other hand, by Corollary 3.9, $L_K(H)$ is isomorphic to $L_K(A_{n+1})$, where
\[A_{n+1}= \xymatrix{
    \bullet^{v_n} \ar@{->}[r]^{e_n} & \bullet^{v_{n-1}} \ar@{->}[r]^{e_{n-1}} & \ldots & \bullet^{v_1} \ar@{->}[r]^{e_{1}} & \bullet^{v}.
    }\] It is also well-known that $L_K(A_{n+1}) \cong M_{n+1}(K)$, so $L_K(H)\cong M_{n+1}(K)$.
This implies that $L_K(H)$ has Invariant Basis Number, and therefore,
$L_K(E(E^0_j))$ has Invariant Basis Number, by \cite[Remark 1.5]{l:lomar}.

(2) Suppose that no $E_i$ contains an isolated vertex. Notice that $E^0_{sf}$ is a hereditary subset of
$E^0$, that $(E^0\setminus E^0_{sf}, r^{-1}_E(E^0\setminus E^0_{sf}), s_E, r_E)$ is a finite  acyclic graph, and
that for each  $v\in E^0\setminus E^0_{sf}$ there exists a path in $E$ from $v$ to $E^0_{sf}$. Therefore,
by Lemma 3.7, $L_K(E)\cong L_K(E(E^0_{sf}))$. We can apply Corollary 3.9 and Lemma 3.4 as many times as needed
(but infinitely many times) to get a finite source-free graph $F$ such that $L_K(E)\cong L_K(F)$.

Note that the cycle structure of $E(E^0_{sf})$ and $E$ are determined by the cycle structure of $E_{sf}$ and vice versa,
that the isomorphisms, defined in Lemmas 3.4 and 3.7, and Corollary 3.9, bring a cycle without exits to a
cycle without exits. Moreover, Lemma 3.4 allows one to remove heads of finite length while preserving isomorphism classes.
From these notes, we immediately get the statements (ii) and (iii), finishing the proof.
\end{proof}

A corollary of Theorem 3.10, we have reduced the question to source-free graphs. In light of this note,
we next provides some certain classes of finite graphs for which the Leavitt path algebra having Invariant
Basis Number. We first consider finite graphs containing a source cycle, which is given in \cite{anp:lpahugn}
to study Leavitt path algebras having Unbounded Generating Number. In fact, the following result follows immediately
from \cite[Theorem 3.16]{anp:lpahugn}, but we want to express another proof in terms of Theorem 2.5.

\begin{cor}
Let $E$ be a finite graph and $K$ a field. Let \[E= E_0\rightarrow E_1\rightarrow\cdots\rightarrow E_i\rightarrow\cdots
\rightarrow E_t = E_{sf}\] be a sequence of graphs which arises in some step-by-step process of source
eliminations. Then, if $E_i$ contains an isolated vertex (for some $0\leq i\leq t$), or $E_{sf}$ contains contains a source cycle,
then $L_K(E)$ has Invariant Basis Number.
\end{cor}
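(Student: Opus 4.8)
The plan is to treat the two hypotheses separately, reducing the source-cycle hypothesis to the matrix criterion of Theorem 2.5. First I would dispose of the isolated-vertex hypothesis: if some $E_i$ contains an isolated vertex, then Theorem 3.10(1) immediately yields that $L_K(E)$ has Invariant Basis Number, and there is nothing more to prove. Hence I may assume that $E_{sf}$ contains a source cycle and, combining with the case just settled, that no $E_i$ contains an isolated vertex.

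Under this assumption Theorem 3.10(2) provides a finite source-free graph $F$ with $L_K(E)\cong L_K(F)$ whose source cycles are in bijection with those of $E_{sf}$; since $E_{sf}$ has a source cycle, so does $F$. As Invariant Basis Number is preserved by ring isomorphisms, it suffices to prove that $L_K(F)$ has it, and I would do this through Theorem 2.5. The key observation is that the rank inequality $\mathrm{rank}(A^t_F-J_F)<\mathrm{rank}([A^t_F-J_F\ \ b])$ is, by the standard solvability criterion for linear systems (the Rouch\'{e}--Capelli theorem), precisely the assertion that the system $(A^t_F-J_F)\mathrm{x}=b$ has no solution over $\mathbb{Q}$.

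So the crux is to show this system is inconsistent. Write the source cycle of $F$ as $c=f_1f_2\cdots f_k$ with vertices $u_l=s(f_l)$ and $r(f_l)=u_{l+1}$, the indices read cyclically so that $u_{k+1}=u_1$. Each $u_l$ emits the edge $f_l$, hence is regular, so in $A^t_F-J_F$ its row carries the diagonal $-1$ coming from $J_F$; and since $c$ is a source cycle, $|r^{-1}(u_{l+1})|=1$, so the only edge into $u_{l+1}$ is $f_l$. Consequently the equation of $(A^t_F-J_F)\mathrm{x}=b$ indexed by $u_{l+1}$ collapses to $x_{u_l}-x_{u_{l+1}}=1$. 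Summing these $k$ equations around the cycle, the left-hand side telescopes to $0$ while the right-hand side is $k\ge 1$, a contradiction. Thus the system has no solution, the rank inequality of Theorem 2.5 holds, and $L_K(F)$---and hence $L_K(E)$---has Invariant Basis Number. I expect the main obstacle to be recognizing that the rows indexed by the source-cycle vertices reduce to this telescoping unit-difference subsystem; once that is seen, the remainder is a direct appeal to Theorems 2.5 and 3.10.
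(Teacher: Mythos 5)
Your proposal is correct and follows essentially the same route as the paper: both reduce to the source-free case via Theorem 3.10 and then apply the rank criterion of Theorem 2.5 to the rows indexed by the source-cycle vertices. Your telescoping of the inconsistent subsystem $x_{u_l}-x_{u_{l+1}}=1$ is exactly the paper's row operation of adding rows $2,\dots,p$ to row $1$ to produce the row $(0,\dots,0,p)$, merely rephrased via the Rouch\'e--Capelli criterion.
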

\begin{proof} We denote $E^0$ by $\{v_1, v_2, ..., v_h\}$, in such a way that the non-sink vertices of $E$
appear as $v_1, ..., v_z$. We then have that
$$[A^t_E - J_E \ \ b]=\left(\begin{tabular}{cccccccc}
$a_{11}-1$&$a_{21}$&...&$a_{z1}$&$0$&...&$0$&1\\
$a_{12}$&$a_{22}-1$&...&$a_{z2}$&$0$&...&$0$&1\\
.&.&.&.&.&.&.\\
$a_{1z}$&$a_{2z}$&...&$a_{zz} -1$&$0$&...&$0$&1\\
$a_{1(z+1)}$&$a_{2(z+1)}$&...&$a_{z(z+1)}$&$0$&...&$0$&1\\
.&.&.&.&.&.&.\\
$a_{1h}$&$a_{2h}$&...&$a_{zh}$&0&...&0&1\\
\end{tabular}\right).$$

Assume first that $E_i$ contains an isolated vertex for some $i$. By Theorem 3.10 (1), $L_K(E)$ has Invariant Basis Number.

On the other hand, suppose that no $E_i$ contains an isolated vertex. By Theorem 3.10 (2),
we may assume without loss of generality that $E$ is a source-free graph, that means,
$E = E_{sf}$. Then, by our hypothesis, $E$ contains a source cycle $c$, i.e, $|r^{-1}(v)| = 1$ for all $v\in c^0$.
By renumbering vertices if necessary, we may assume without loss of generality that $c^0= \{v_1, ..., v_p\}$.
(Note that, as each vertex in $c^0$ emits at least one edge, we have that each of $\{v_1, ..., v_p\}$ is a regular
vertex.) The condition $|r^{-1}(v)| = 1$ then yields:
\begin{itemize}
\item[-] $a_{i(i+1)} =1$ for $1\leq i\leq p-1$;
\item[-] $a_{p1} = 1$;
\item[-] $a_{j(i+1)} =0$ for $1\leq i\leq p-1$ and $j\neq i\ (1\leq j\leq h)$;
\item[-] $a_{j1} = 0$ if $j\neq p\ (1\leq j\leq h)$.
\end{itemize}
If $p=1$ (i.e., if $c$ is a loop), then $a_{1,1} = 1$, and the matrix $[A^t_E - J_E \ \ b]$ becomes
$$[A^t_E - J_E \ \ b]=\left(\begin{tabular}{cccccccc}
$0$&$0$&...&$0$&$0$&...&$0$&1\\
$a_{12}$&$a_{22}-1$&...&$a_{z2}$&$0$&...&$0$&1\\
.&.&.&.&.&.&.\\
$a_{1z}$&$a_{2z}$&...&$a_{zz} -1$&$0$&...&$0$&1\\
$a_{1(z+1)}$&$a_{2(z+1)}$&...&$a_{z(z+1)}$&$0$&...&$0$&1\\
.&.&.&.&.&.&.\\
$a_{1h}$&$a_{2h}$&...&$a_{zh}$&0&...&0&1\\
\end{tabular}\right).$$
This implies that \[\mathrm{ramk}(A^t_E -J_E) < \mathrm{rank}([A^t_E -J_E\ \ b]),\]
so $L_K(E)$ has Invariant Basis Number, by Theorem 2.5.

If $p \geq 2$, then using the noted information about the $a_{ij}$, the $p$ first rows of the matrix $[A^t_E -J_E\ \ b]$ can be written as:
$$\left(\begin{tabular}{cccccccccc}
$-1$&$0$&$0$&...&$0$&$1$&$0$&...&$0$&$1$\\
$1$&$-1$&$0$&...&$0$&$0$&$0$&...&$0$&$1$\\
$0$&$1$&$-1$&...&$0$&$0$&$0$&...&$0$&$1$\\
.&.&.&.&.&.&.&.&.\\
$0$&$0$&$0$&...&$1$&$-1$&$0$&...&$0$&$1$\\
$a_{1(p+1)}$&$a_{2(p+1)}$&$a_{3(p+1)}$&...&$a_{(p-1)(p+1)}$&$a_{p(p+1)}$&$a_{(p+1)(p+1)}$&...&$0$&$1$\\
.&.&.&.&.&.&.\\
$a_{1h}$&$a_{2h}$&$a_{3h}$&...&$a_{(p-1)h}$&$a_{ph}$&$a_{(p+1)h}$&...&$0$&$1$\\
\end{tabular}\right).$$
We add all rows $i\ (2\leq i\leq p)$ from the first row in the matrix $[A^t_E -J_E\ \ b]$, which yields the
equivalent matrix:
$$\left(\begin{tabular}{cccccccccc}
$0$&$0$&$0$&...&$0$&$0$&$0$&...&$0$&$p$\\
$1$&$-1$&$0$&...&$0$&$0$&$0$&...&$0$&$1$\\
$0$&$1$&$-1$&...&$0$&$0$&$0$&...&$0$&$1$\\
.&.&.&.&.&.&.&.&.\\
$0$&$0$&$0$&...&$1$&$-1$&$0$&...&$0$&$1$\\
$a_{1(p+1)}$&$a_{2(p+1)}$&$a_{3(p+1)}$&...&$a_{(p-1)(p+1)}$&$a_{p(p+1)}$&$a_{(p+1)(p+1)}$&...&$0$&$1$\\
.&.&.&.&.&.&.\\
$a_{1h}$&$a_{2h}$&$a_{3h}$&...&$a_{(p-1)h}$&$a_{ph}$&$a_{(p+1)h}$&...&$0$&$1$\\
\end{tabular}\right).$$ This implies that \[\mathrm{ramk}(A^t_E -J_E) < \mathrm{rank}([A^t_E -J_E\ \ b]),\]
so $L_K(E)$ has Invariant Basis Number, by Theorem 2.5, thus completing the
proof of the corollary.
\end{proof}

We conclude these and paper by considering the class of finite graphs without two distinct cycles
have a common vertex. Interestingly, in \cite{aajz:lpaofgkd} the authors showed that the Leavitt path
algebra of such a graph has finite Gelfand-Kirillov dimension and vice versa. The following corollary
shows that the Leavitt path algebra of such a graph has Invariant Basis Number.

\begin{cor}
Let $K$ be a field and $E$ a finite graph without two distinct cycles have a common vertex.
Then $L_K(E)$ has Invariant Basis Number.
\end{cor}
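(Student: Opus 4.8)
The plan is to reduce the statement to Corollary 3.11, which already yields IBN whenever some $E_i$ contains an isolated vertex or $E_{sf}$ contains a source cycle. Thus it suffices to show that a finite graph $E$ in which no two distinct cycles share a common vertex satisfies at least one of these two conditions. First I would record the elementary observation that the source elimination process preserves the set of cycles: a source is never a vertex of a cycle, so eliminating it destroys no existing cycle and (passing to a subgraph) creates no new one. Consequently every cycle of $E$ survives, with the same vertex set, in $E_{sf}$, and in particular $E_{sf}$ again enjoys the property that no two distinct cycles share a common vertex.

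Next I would split into cases according to Corollary 3.11. If some $E_i$ contains an isolated vertex, we are done immediately. Otherwise no $E_i$ contains an isolated vertex; in particular $E_{sf}\neq E_{triv}$, so $E_{sf}$ is a nonempty finite source-free graph. A nonempty finite source-free graph cannot be acyclic (a nonempty finite acyclic graph always has a source), so $E_{sf}$ contains a cycle. It therefore remains to prove the following purely graph-theoretic statement, which is the heart of the argument: \emph{every nonempty finite source-free graph $F$ in which no two distinct cycles share a common vertex contains a source cycle.} Applying this to $F=E_{sf}$ produces a source cycle in $E_{sf}$, and Corollary 3.11 then gives that $L_K(E)$ has IBN.

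To prove this key statement I would pass to the strongly connected components of $F$ and their condensation (a finite DAG). The hypothesis that no two distinct cycles share a vertex forces every strongly connected component that contains a cycle to be a single simple cycle: if such a component $S$ were not a simple cycle, some vertex $v\in S$ would have two distinct out-edges inside $S$, and completing each by a shortest return path within $S$ would yield two distinct cycles through $v$, a contradiction. Since $F$ is finite and nonempty, its condensation has a source node $S_0$; and because $F$ has no sources, $S_0$ cannot be a trivial one-vertex component (such a vertex would have no incoming edge at all), so $S_0$ is a simple cycle $c$. Being a source of the condensation, $c$ receives no edge from any other component, whence every edge into a vertex of $c$ lies on $c$ itself; as $c$ is a simple cycle this means $|r^{-1}(v)|=1$ for every $v\in c^0$, i.e. $c$ is a source cycle. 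The main obstacle, and the step demanding the most care, is precisely this structural claim that the no-common-vertex hypothesis collapses each nontrivial strongly connected component to a single simple cycle; once that is in hand, source-freeness of $F$ does the rest by ruling out trivial source components.
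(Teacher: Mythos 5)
Your proposal is correct and follows essentially the same route as the paper: reduce to the source-free graph $E_{sf}$ and then invoke Corollary 3.11 via the existence of a source cycle. The only difference is that the paper dismisses the key combinatorial step --- that a nonempty finite source-free graph in which no two distinct cycles share a vertex contains a source cycle --- as immediate, whereas you prove it (correctly) via strongly connected components, and you also verify explicitly that the no-shared-vertex hypothesis passes to $E_{sf}$.
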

\begin{proof} By Theorem 3.10, we may assume without loss of generality that $E$ is a source-free graph.
Then, by our hypothesis, we immediately get that $E$ contains a source cycle, and hence, $L_K(E)$
has Invariant Basis Number, finishing the proof.
\end{proof}

\vskip 0.5 cm \vskip 0.5cm {

\end{document}